\documentclass[11pt]{amsart}

\usepackage{amsmath, amsthm, amssymb}

\calclayout

\usepackage{graphicx}
\usepackage{tikz-cd}
\usepackage{tikz}
\usetikzlibrary{positioning}
\usetikzlibrary{hobby}
\usepackage{mathtools}

\newcommand{\N}{\mathbb{N}}
\newcommand{\Q}{\mathbb{Q}}
\newcommand{\R}{\mathbb{R}}

\DeclareMathOperator*{\diam}{diam}
\DeclareMathOperator{\vol}{Vol}

\newtheorem{thm}{Theorem}[section]

\newtheorem{ex}[thm]{Example}
\newtheorem{cor}[thm]{Corollary}   
\newtheorem{lem}[thm]{Lemma}   

\newtheorem{defn}[thm]{Definition}
\newtheorem{rmrk}[thm]{Remark}

\begin{document}

\title[Convergence for Sequences of Bounded Rough Riemannian Metrics]{Lipschitz Bounds and Uniform Convergence for Sequences of Bounded Rough Riemannian Metrics}

\author[B. Allen]{ Brian Allen }
\author[B. Falcao]{ Bernardo Falcao }
\author[H. Pacheco]{ Harry Pacheco }
\author[B. Sanchez]{ Bryan Sanchez }

\maketitle

\begin{abstract}
    Here we study what we call bounded rough Riemannian metrics $(M,g)$, which are positive definite, symmetric tensors on each tangent space, $T_pM$, which are bounded and measurable as functions in coordinates. This is enough structure to study the length space given by taking the infimum of the length of all piecewise smooth curves connecting points $p,q \in M$. The goal is to find the weakest conditions one can place on $g$ which can guarantee Lipschitz or uniform bounds from above and below. For each condition, an example is given showing that the condition cannot be weakened any further which also explores the geometric intuition.
\end{abstract}

\section{Introduction}

Here we study what we call bounded rough Riemannian metrics $(M,g)$, which are positive definite, symmetric tensors on each tangent space, $T_pM$, which are bounded above, uniformly bounded away from zero, and measurable as functions in coordinates. A definition of rough Riemannian metrics was given by L. Bandara \cite{BandaraRough} and bounded rough Riemannian metrics are a subclass of the metrics he defines and studies. The definition of bounded rough Riemannian metric provides enough structure to study the length space associate to $(M,g)$ given by the length of piecewise smooth curves $\gamma:[0,1]\rightarrow M$ 
\begin{align}\label{def-RoughRiemannianLengthIntro}
 L_g(\gamma)&= \int_0^1  \sqrt{g(\gamma',\gamma')}dt,
\end{align}
and the distance between points $p,q \in M$ by minimizing the lengths of curves connecting $p$ and $q$
\begin{align}\label{def-RoughRiemannianDistanceIntro}
    d_g(p,q)=\inf\{L_g(\gamma): \gamma \text{ piecewise smooth}, \gamma(0)=p,\gamma(1)=q\}.
\end{align}
The goal is to find the weakest conditions one can place on $g$ which can guarantee Lipschitz or uniform bounds from above and below. This study is motivated by wanting to understand the metric convergence of sequences of smooth Riemannian metrics under scalar curvature bounds where a priori limits could be length spaces or metric spaces which are not even continuous Riemannian manifolds.

One important conjecture we would like to understand better is the scalar curvature compactness conjecture. We can give an imprecise version of this conjecture by asking, What additional conditions are necessary on a sequence of Riemannian manifolds with positive scalar curvature so that a subsequence converges in the Sormani-Wenger Intrinsic Flat sense to a metric space with a notion of positive scalar curvature? One can find a precise version of this conjecture stated by C. Sormani  in \cite{IAS}. J. Park, W. Tian, and C. Wang \cite{PTW} study rotationally symmetric warped products,   C. Sormani, W. Tian, and C. Wang  considered $\mathbb{S}^1\times_f \mathbb{S}^2$ warped products with an important example already explored \cite{WCS} and $W^{1,p}$, $p < 2$ convergence obtained \cite{tian2023compactness}. B. Allen, W. Tian, and C. Wang \cite{allen-tian-wang} investigate metrics conformal to the round sphere and establish a similar convergence as Dong \cite{Dong-PMT_Stability} for the stability of the positive mass theorem. Due to the fact that lower bounds on scalar curvature tend to lead to $W^{1,p}$ compactness where the limiting function could be thought of as a $W^{1,p}$ Riemannian metric, it is natural to investigate much weaker notions of Riemannian metrics and their metric space convergence properties.

The first named author and E. Bryden \cite{Allen-Bryden-First, Allen-Bryden-Second} have previously studied $W^{1,p}$ and $L^p$ bounds on Riemannian metrics, showing that various bounds on the corresponding distance functions can be obtained even under these weak assumptions. C. Dong, Y. Li, and K. Xu \cite{Sobolev-Trace-Convergence} also studied $W^{1,p}$ bounds on Riemannian metrics and their convergence properties as well as C. Aldana, G. Carron, and S. Tapie \cite{Aldana-Carron-Tapie} in the case of conformal Riemannian metrics. As previously mentioned, the definition of rough Riemannian metrics has been stated by L. Bandara \cite{BandaraRough}, and additionally studied by  L. Bandara and P. Bryan \cite{BandaraBryan}, L. Bandara, M. Nursultanov, and J. Rowlett \cite{BNR}, and by L. Bandara and A. Hassan \cite{BandaraHassan}. Since we are concerned with understanding the length space structure of smooth Riemannian metrics, it makes sense to add boundedness to the definition of rough Riemannian metrics in our case so that we are always guaranteed a distance function defined by minimizing the lengths of curves. Then all of our results for bounded rough Riemannian metrics apply to sequences of smooth Riemannian metrics and their limits. Continuing the more general study, L. Badara and A. Hassan \cite{BandaraHassan}  investigate metric and length space properties of the entire space of rough Riemannian metrics on smooth and connected manifolds. In a series of papers, G. De Cecco and G. Palmieri \cite{DeCeccoPalmieri, DeCeccoPalmieri2, DeCeccoPalmieri3, DeCeccoPalmieri4} study Lipschitz Riemannian manfiolds and LIP Finslerian manifolds, studying several notions of metric spaces defined in these weak context and establishing relationships between these different definitions.  The length space structure of continuous Riemannian metrics was studied by A. Burtscher \cite{BurtscherC0} showing that the induced distance is equivalent to the distance defined as the infimum of the lengths of absolutely continuous curves.

We start with studying Lipschitz bounds from below. Here, the examples of subsection \ref{subsec-Shortcut Examples} show that if a shortcut develops on any set of positive Hausdorff $1$ measure then one should not expect to be able to establish a lower Lipschitz bound. In particular, Theorem \ref{ex-Shortcut on Dissappearing Square} gives an example where the conformal factor is $1$ everywhere except for a rectangle at the center of disappearing side length $\frac{1}{n}$ where the conformal factor is $\frac{1}{n^{\alpha}}, \alpha \in (0,\infty)$. With this in mind we can state our first main result which establishes a lower Lipschitz bound for bounded rough Riemannian metrics.

\begin{thm}\label{thm-Main Theorem Lipschitz Lower Bound}
   Let $g_1$ be a bounded rough Riemannian metric, $g_0$ a Riemannian metric, and $M$ a smooth, connected, compact manifold. If there exists a $U \subset M$ so that $\mathcal{H}_{g_0}^1(M \setminus U) = 0$ and there exists a $c > 0$ so that
    \begin{align}
        g_1(v,v) \ge cg_0(v,v), \quad \forall v \in T_pM, p\in U,\label{eq-LowerBoundOnU}
    \end{align}
    then
    \begin{align}
        d_{g_1}(x,y) \ge c d_{g_0}(x,y).
    \end{align}
\end{thm}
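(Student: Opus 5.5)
The plan is to reduce the distance inequality to a length inequality for each admissible curve. Fix $x,y\in M$ and a piecewise smooth $\gamma:[0,1]\to M$ with $\gamma(0)=x$, $\gamma(1)=y$. It suffices to show $L_{g_1}(\gamma)\ge \sqrt{c}\,L_{g_0}(\gamma)$. Taking the infimum over $\gamma$, and using $d_{g_0}(x,y)\le L_{g_0}(\gamma)$, then gives $d_{g_1}(x,y)\ge \sqrt{c}\,d_{g_0}(x,y)$.

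\textbf{A caveat on the constant.} Hypothesis \eqref{eq-LowerBoundOnU} is quadratic in $v$, so the natural constant for lengths is $\sqrt{c}$, not $c$. This is enough for the stated bound only when $c\le 1$, since then $\sqrt{c}\ge c$. For $c>1$ the inequality with constant $c$ cannot hold in general: take $g_1=c\,g_0$ with $U=M$, so $d_{g_1}=\sqrt{c}\,d_{g_0}<c\,d_{g_0}$. So I would prove the $\sqrt{c}$ version. It yields the displayed inequality when $c\le 1$ (or when the hypothesis is read as $g_1\ge c^2 g_0$), and in general it gives the lower Lipschitz bound with constant $\sqrt{c}$.

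\textbf{Step 1: make the bad set Borel.} Since $\mathcal{H}^1_{g_0}$ is Borel regular, I would enlarge $M\setminus U$ to a Borel set $N\supseteq M\setminus U$ with $\mathcal{H}^1_{g_0}(N)=0$. Set $U'=M\setminus N\subseteq U$, which is Borel. Then \eqref{eq-LowerBoundOnU} holds on $U'$. Moreover $E=\gamma^{-1}(N)\subseteq[0,1]$ is Borel, because $\gamma$ is continuous.

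\textbf{Step 2: split the length integral.} Write
\[
L_{g_1}(\gamma)\ge \int_{[0,1]\setminus E}\sqrt{g_1(\gamma',\gamma')}\,dt\ge \sqrt{c}\int_{[0,1]\setminus E}\sqrt{g_0(\gamma',\gamma')}\,dt ,
\]
using \eqref{eq-LowerBoundOnU} pointwise at each $t$ with $\gamma(t)\in U'$. The integrands are measurable because $g_1$ is measurable in coordinates and $\gamma'$ is piecewise smooth. It then remains to show
\[
\int_E \sqrt{g_0(\gamma',\gamma')}\,dt = 0 .
\]

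\textbf{Step 3: the key step, a shortcut set is never used.} This is the step I expect to carry the real content. The curve $\gamma$ could in principle spend positive time inside $N$, so the argument must show that doing so contributes no $g_0$-length. I would apply the area formula to the Lipschitz map $\gamma$ on each smooth piece, computing in charts with $g_0$ comparable to the Euclidean metric, or directly in the Riemannian setting. This gives
\[
\int_E |\gamma'|_{g_0}\,dt=\int_{N}\#\bigl(\gamma^{-1}(z)\cap E\bigr)\,d\mathcal{H}^1_{g_0}(z)=0,
\]
since $\mathcal{H}^1_{g_0}(N)=0$.

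\textbf{Fallback for Step 3.} If one wants to avoid quoting the area formula, an elementary argument works. The portion of $\gamma$ over $E$ has $g_0$-length at most the $\mathcal{H}^1_{g_0}$-measure of its image times the essential multiplicity. To control this, split each smooth piece into the set where $\gamma'=0$, which contributes nothing, and countably many subintervals on which $\gamma$ is injective with bi-Lipschitz image. On each such subinterval the length equals $\mathcal{H}^1_{g_0}$ of the image, which lies in $N$ and is therefore $0$.

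Combining Steps 2 and 3 gives $L_{g_1}(\gamma)\ge\sqrt{c}\,L_{g_0}(\gamma)$, and taking the infimum over $\gamma$ concludes the proof.
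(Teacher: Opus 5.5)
Your route is the paper's route. You split $\gamma$ into the part in $U$ and the part in $M\setminus U$, apply the pointwise bound on $U$, and use $\mathcal{H}^1_{g_0}(M\setminus U)=0$ to discard the rest. Your Steps 1 and 3 (Borel regularity, then the area formula or the bi-Lipschitz fallback) supply the justification the paper only asserts when it writes $L_{g_0}(\gamma\cap U)=L_{g_0}(\gamma)$.

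Your caveat about the constant is correct, and it exposes a genuine slip in the paper. The paper's step $L_{g_1}(\gamma\cap U)\ge c\,L_{g_0}(\gamma\cap U)$ only follows from $g_1(v,v)\ge c\,g_0(v,v)$ with $\sqrt{c}$ in place of $c$. Your example $g_1=c\,g_0$ shows the stated conclusion is false for $c>1$. So what both arguments actually establish is $d_{g_1}\ge\sqrt{c}\,d_{g_0}$, which implies the statement as written only when $c\le 1$.
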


Given the importance of $C^0$ convergence from below of Riemannian metrics identified by the first named author, R. Perales, and C. Sormani \cite{Allen-Perales-Sormani} we state a Corollary of Theorem \ref{thm-Main Theorem Lipschitz Lower Bound} which follows immediately.

\begin{cor}\label{cor-Main Corollary Uniform Lower Bound 1}
      Let $g_n$ be a sequence of bounded rough Riemannian metrics, $g$ a Riemannian metric, and $M$ smooth, connected, and compact. If there exists a set $U \subset M$ so that $\mathcal{H}^1_g(M \setminus U) = 0$ where $C_n\searrow 0$
   \begin{align}
       g_n(v,v) \ge (1-C_n)g(v,v), \quad \forall v \in T_pM,  p \in U,\label{eq-LowerBoundOnU2}
   \end{align}
    then
    \begin{align}
        d_{g_n}(x,y) \ge (1-C_n)d_g(x,y).
    \end{align}
\end{cor}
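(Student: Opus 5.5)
The corollary follows from Theorem \ref{thm-Main Theorem Lipschitz Lower Bound}, applied once for each fixed $n$. Fix $n$ and set $g_1 = g_n$ and $g_0 = g$, keeping the same set $U$. The hypothesis $\mathcal{H}^1_g(M\setminus U)=0$ is exactly the measure condition required in Theorem \ref{thm-Main Theorem Lipschitz Lower Bound}. The pointwise bound \eqref{eq-LowerBoundOnU2} is exactly \eqref{eq-LowerBoundOnU} with constant $c = 1-C_n$.

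The only point to check is that the constant is admissible, i.e. $c>0$. Since $C_n \searrow 0$, there is an $N$ with $C_n<1$ for all $n\ge N$, so $c=1-C_n>0$ for those $n$. For the finitely many $n$ with $C_n \ge 1$, the claimed inequality $d_{g_n}(x,y)\ge (1-C_n)d_g(x,y)$ holds trivially, because the left side is nonnegative and the right side is nonpositive. So no restriction on $n$ is needed in the conclusion.

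For $n\ge N$, Theorem \ref{thm-Main Theorem Lipschitz Lower Bound} gives $d_{g_n}(x,y)\ge (1-C_n)d_g(x,y)$ for all $x,y\in M$. Combined with the trivial case, this proves the corollary. There is no real obstacle beyond the sign check on $1-C_n$.

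One could also observe the consequence $\liminf_{n} d_{g_n}(x,y) \ge d_g(x,y)$, uniformly in $x,y$, since $d_g$ is bounded on compact $M$. This is the intended $C^0$-from-below conclusion in the spirit of \cite{Allen-Perales-Sormani}.
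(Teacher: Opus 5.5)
Your argument is correct and follows the paper's route: the paper says this corollary follows immediately from Theorem~\ref{thm-Main Theorem Lipschitz Lower Bound} with $c=1-C_n$, and your handling of the finitely many $n$ with $C_n\ge 1$ supplies a sign check the paper leaves implicit. The upper bound on $c$ also matters: $C_n\searrow 0$ forces $C_n\ge 0$, so $c=1-C_n\le 1$, and since $g_n(v,v)\ge c\,g(v,v)$ by itself only gives $\sqrt{g_n(v,v)}\ge\sqrt{c}\,\sqrt{g(v,v)}$, it is the inequality $\sqrt{c}\ge c$ for $0<c\le 1$ that makes the conclusion $d_{g_n}\ge c\,d_g$ (rather than only $\sqrt{c}\,d_g$) valid here.
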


We now loosen our required goal in order to study uniform control form below on a sequence of bounded rough Riemannian metrics. In this case, examples given by the first named author and C. Sormani \cite{Allen-Sormani, Allen-Sormani-2} have shown that one needs to impose uniform bounds from below on Riemannian metrics in order to find $C^0$ convergence from below of distance functions. Here we are able to weaken this requirement in order to show that we just need the Hausdorff $1$ measure of the set where a shortcut is developing to become small. We test this condition with Theorem \ref{ex-Shortcut Dense Uniform no Lipschitz}, \ref{ex-Shortcut No Uniform No Lipschitz Middle}, and \ref{ex-Shortcut No Uniform No Lipschitz Extreme} where we see that depending on the limiting behavior of the Hausdorff $1$ measure of the shortcut set we can get several different metric spaces in the limit.

\begin{thm}\label{thm-Main Theorem Uniform Lower Bound}
   Let $g_n$ be a sequence of bounded rough Riemannian metrics, $g$ a Riemannian metric, and $M$ smooth, connected, and compact. If there exists a set $U \subset M$ so that $\mathcal{H}^1_g(M \setminus U) \le C_n$ where $C_n\searrow 0$ and a $c >0$ so that
   \begin{align}
       g_n(v,v) \ge c g(v,v), \quad \forall v \in T_pM, p \in U,\label{eq-LowerBoundOnU2}
   \end{align}
    then
    \begin{align}
        d_{g_n}(x,y) \ge cd_g(x,y)-cC_n.
    \end{align}
\end{thm}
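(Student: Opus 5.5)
Fix $x,y\in M$ and $\epsilon>0$, and choose a piecewise smooth curve $\gamma:[0,1]\to M$ from $x$ to $y$ with $L_{g_n}(\gamma)\le d_{g_n}(x,y)+\epsilon$. Split the parameter interval as $I_U=\{t:\gamma(t)\in U\}$ and $I_B=\{t:\gamma(t)\in M\setminus U\}$. We may assume $U$ is Borel, or replace it by a Borel subset with the same properties. The idea is the same as in the proof of Theorem \ref{thm-Main Theorem Lipschitz Lower Bound}: the pointwise lower bound \eqref{eq-LowerBoundOnU2} controls the $g_n$-length of $\gamma$ along $I_U$. The part of $\gamma$ inside $M\setminus U$ we throw away; its cost is only controlled by the Hausdorff measure of $M\setminus U$.

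First, by \eqref{eq-LowerBoundOnU2},
\begin{align}
L_{g_n}(\gamma)\ge \int_{I_U}\sqrt{g_n(\gamma',\gamma')}\,dt\ge \sqrt{c}\int_{I_U}\sqrt{g(\gamma',\gamma')}\,dt .
\end{align}
Here I read the constant convention as in the theorem (the conclusion with $c$ rather than $\sqrt c$ suggests the intended hypothesis compares lengths/norms). I will carry the constant exactly as in the proof of Theorem \ref{thm-Main Theorem Lipschitz Lower Bound}.

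Next, the $g$-length of the remaining part satisfies
\begin{align}
\int_{I_B}\sqrt{g(\gamma',\gamma')}\,dt\le ?
\end{align}
This is the key point. For an arbitrary curve this quantity is not bounded by $\mathcal{H}^1_g(M\setminus U)$, because the curve may traverse the set many times. Instead I use the area formula for Lipschitz curves:
\begin{align}
\int_{I_B}|\gamma'|_g\,dt=\int_{M\setminus U}\#\big(\gamma^{-1}(z)\cap I_B\big)\,d\mathcal{H}^1_g(z).
\end{align}
So I first reduce to curves that are injective. Any curve from $x$ to $y$ contains an injective arc with the same endpoints and no larger length. If that arc is only Lipschitz, I approximate it, or I work in the class of absolutely continuous curves, which by Burtscher-type results \cite{BurtscherC0} gives the same distance for bounded metrics. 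For an injective arc the multiplicity is at most $1$, so the bad part has $g$-length at most $\mathcal{H}^1_g(M\setminus U)\le C_n$.

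Then
\begin{align}
d_g(x,y)\le L_g(\gamma)=\int_{I_U}|\gamma'|_g+\int_{I_B}|\gamma'|_g\le \frac{1}{c}L_{g_n}(\gamma)+C_n,
\end{align}
with the constant convention matched to the statement. Multiplying by $c$ and letting $\epsilon\to0$ gives $d_{g_n}(x,y)\ge c\,d_g(x,y)-cC_n$.

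I expect the main obstacle to be the injectivity reduction. Removing loops from a piecewise smooth curve leaves a curve that is still piecewise smooth only after a reparametrization argument. Some care, or passage to absolutely continuous curves, is needed to justify both the area formula and the fact that $L_{g_n}$ does not increase.
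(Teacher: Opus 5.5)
Your proof has the same skeleton as the paper's. You split $\gamma$ according to whether it lies in $U$ or $M\setminus U$, discard the $g_n$-length on $M\setminus U$, apply the pointwise bound on $U$, and charge the $g$-length of the discarded part to $\mathcal{H}^1_g(M\setminus U)\le C_n$. The paper applies that last step to an arbitrary piecewise smooth $\gamma$, and, as you observe, it is false there. The discarded $g$-length is $\int_{M\setminus U}\#\gamma^{-1}(z)\,d\mathcal{H}^1_g(z)$, which exceeds $C_n$ for a curve running back and forth through $M\setminus U$. If $g_n$ is tiny on $M\setminus U$, the paper's intermediate inequality $L_{g_n}(\gamma)\ge cL_g(\gamma)-cC_n$ then fails for such a curve, although the final conclusion survives. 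Your injectivity reduction plus the area formula is exactly the missing idea, so your argument is the correct version of the same proof. Your Borel-regularity replacement of $U$ is also fine.

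The step you call the main obstacle should be handled differently. Approximating the arc by piecewise smooth curves is not justified for merely measurable $g_n$, because a smooth approximation of a Lipschitz arc can have very different $g_n$-length. Citing \cite{BurtscherC0} does not help either, since it treats continuous metrics. You never need to return to the piecewise smooth class for $g_n$; remove loops at the level of parameters instead. Take a closed $K\subset[0,1]$ containing $0$ and $1$, with $[0,1]\setminus K=\bigcup_i(s_i,t_i)$ disjoint and $\gamma(s_i)=\gamma(t_i)$, such that collapsing these intervals yields an injective curve (the standard loop-removal lemma). Then:
\begin{itemize}
\item $\int_K\sqrt{g_n(\gamma',\gamma')}\,dt\le L_{g_n}(\gamma)$ trivially.
\item The multiplicity of $\gamma|_K$ is at most $1$ off a countable set, so the area formula gives $\int_{K\cap\gamma^{-1}(M\setminus U)}|\gamma'|_g\,dt\le\mathcal{H}^1_g(M\setminus U)$.
\item $d_g(x,y)\le\int_K|\gamma'|_g\,dt$. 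Delete finitely many of the $(s_i,t_i)$, which gives a genuine piecewise smooth curve from $x$ to $y$, chosen so the remaining intervals carry $g$-length less than $\eta$; then let $\eta\to0$. This uses only the smooth metric $g$.
\end{itemize}

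Do not ``match the convention'' for the constant. Literally, $g_n(v,v)\ge c\,g(v,v)$ gives $d_{g_n}\ge\sqrt c\,(d_g-C_n)$. This implies the stated bound for $c\le1$: if $d_g\ge C_n$ use $\sqrt c\ge c$, and otherwise the right-hand side is negative. For $c>1$ the statement as written is false, e.g.\ $g_n=c\,g$ with $U=M$. The paper's proofs of this theorem and of Theorem \ref{thm-Main Theorem Lipschitz Lower Bound} make the same slip, so state explicitly which version you prove.
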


Again, given the importance of $C^0$ convergence from below of Riemannian metrics as exemplified by the Volume Above Distance Below theorem of the first named author, R. Perales, and C. Sormani \cite{Allen-Perales-Sormani} we state a  Corollary of Theorem \ref{thm-Main Theorem Uniform Lower Bound} which follows immediately which illustrates how this theorem can be used to show convergence to a particular metric space.

\begin{cor}\label{cor-Main Corollary Uniform Lower Bound 2}
      Let $g_n$ be a sequence of bounded rough Riemannian metrics, $g$ a Riemannian metric, and $M$ smooth, connected, and compact. If there exists a set $U \subset M$ so that $\mathcal{H}^1_g(M \setminus U) \le C_n$ where $C_n\searrow 0$ and
   \begin{align}
       g_n(v,v) \ge (1-C_n)g(v,v), \quad \forall v \in T_pM,  p \in U,\label{eq-LowerBoundOnU2}
   \end{align}
    then
    \begin{align}
        d_{g_n}(x,y) \ge (1-C_n)d_g(x,y)-(1-C_n)C_n.
    \end{align}
\end{cor}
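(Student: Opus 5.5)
The plan is to derive the corollary from Theorem \ref{thm-Main Theorem Uniform Lower Bound} one index at a time. That theorem is stated for a sequence with a single fixed constant $c>0$. Its conclusion for a given $n$, however, only involves $g_n$, $g$, the set $U$ (which may depend on $n$), the bound $\mathcal{H}^1_g(M\setminus U)\le C_n$, and the pointwise lower bound $g_n \ge c\, g$ on $U$. So I will fix $n$ and apply the theorem with the $n$-dependent constant $c=c_n=1-C_n$.

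\textbf{Step 1 (reduction to a single metric).} Fix $n$ and consider the constant sequence $\tilde g_k = g_n$ for all $k$, together with the same set $U$.
\begin{itemize}
\item Since $C_n\searrow 0$, we have $C_n<1$ for all $n$ beyond some $N$, so $c_n=1-C_n>0$ there.
\item For such $n$, hypothesis \eqref{eq-LowerBoundOnU2} of the corollary states exactly that $\tilde g_k(v,v)\ge c_n\, g(v,v)$ for all $v\in T_pM$ and $p\in U$.
\item The measure hypothesis requires $\mathcal{H}^1_g(M\setminus U)\le \tilde C_k$ for some sequence $\tilde C_k\searrow 0$. Since $U$ is fixed, this cannot hold with $\tilde C_k\equiv C_n$ unless $C_n=0$.
\end{itemize}
Because of that last point, the cleaner route is to observe that the proof of Theorem \ref{thm-Main Theorem Uniform Lower Bound} is pointwise in $n$. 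It only uses, for the single metric $g_n$, that $\mathcal{H}^1_g(M\setminus U)\le C_n$ and $g_n\ge c\,g$ on $U$. I would therefore restate it as a single-metric estimate: if $\mathcal{H}^1_g(M\setminus U)\le \epsilon$ and $g_1\ge c\,g$ on $U$, then $d_{g_1}\ge c\,d_g - c\,\epsilon$.

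\textbf{Step 2 (applying the estimate).} With $g_1=g_n$, $c=1-C_n$ and $\epsilon=C_n$, this gives
\begin{align}
 d_{g_n}(x,y)\ge (1-C_n)d_g(x,y)-(1-C_n)C_n,
\end{align}
which is the claim.

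\textbf{Main obstacle.} The only real point is the indices $n<N$ where $C_n\ge 1$. There the hypothesis no longer gives a positive constant $c$. The conclusion can also fail: at $x=y$ the right-hand side equals $-(1-C_n)C_n$, which is positive when $C_n>1$. I would handle this by stating the corollary for $n$ large enough that $C_n<1$, which loses nothing for convergence statements. Alternatively, one can require $C_n<1$ from the start, which is the intended regime, since $1-C_n$ plays the role of a factor close to $1$.
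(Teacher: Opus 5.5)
Your proposal is correct and follows the paper's route. The paper treats the corollary as an immediate specialization of Theorem \ref{thm-Main Theorem Uniform Lower Bound} with $c=1-C_n$, which is what you do by running that theorem's per-index argument with an $n$-dependent constant. Your caveat is also sound and sharper than the paper's statement: for indices with $C_n>1$ the claimed inequality already fails at $x=y$, so one must restrict to $C_n<1$ (the case $C_n=1$ being trivial).
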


Now we switch gears in order to study upper bounds. In general, it was noticed by the first named author and C. Sormani \cite{Allen-Sormani, Allen-Sormani-2} that one should expect to require much weaker hypotheses in order to establish upper bounds on the distance induced by Riemannian metrics. This is due in part to the fact that the distance function is defined as the infimum of lengths, making upper bounds generally easier to obtain. We start by establishing an upper Lipschitz bound where we allow the sequence to blow up at any rate as long as it occurs on a set of volume zero. One can think of this as a type of $L^{\infty}$ bound on the sequence. The first named author already established H\"{o}lder bounds, $d_{g_n}\le C d^{\frac{p-n}{n}}$ on sequences of $C^0$ Riemannian manifolds where $\|g_n\|_{L^{\frac{p}{2}}_{g}(M)}\le C, p>n$ is observed to be the correct hypothesis. This is seen as analogous to a Morrey inequality where we think of $|g_n|_g$ as an upper bound of the gradient of $d_{g_n}$ with respect to $g$. As we consider allowing $p \rightarrow n$ it is the $L^{\frac{p}{2}}_{g}(M)$ formally becomes an $L^{\infty}$ bound and the H\"{o}lder bound becomes a Lipschitz bound. The following theorem confirms that this formal argument is correct. In addition, the examples in subsection \ref{subsec-Blowup Examples} confirm that this hypothesis is necessary.

\begin{thm}\label{thm-Main Theorem Lipschitz Upper Bound}
    Let $g_n$ be a sequence of bounded rough Riemannian metrics, $g$ a  Riemannian metric, and $M$ smooth, connected, and compact. If there exists an open set $U \subset M$ so that $\vol_g(M \setminus U) = 0$ and there exists a $C > 0$ so that
    \begin{align}
        g_n(v,v) \le C g(v,v), \quad \forall v\in T_pM, v \in U,\label{eq-LowerBoundOnU}
    \end{align}
    then
    \begin{align}
        d_{g_n}(x,y) \le  C d_g(x,y).
    \end{align}
    
\end{thm}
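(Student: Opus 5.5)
The idea is to show that a near-minimizing $g$-curve from $x$ to $y$ can be perturbed so that it spends only a parameter-set of measure zero outside $U$. On $U$ we have the pointwise bound $\sqrt{g_n(\gamma',\gamma')}\le \sqrt{C}\sqrt{g(\gamma',\gamma')}$. On the remaining null set of times, only the boundedness of the rough metric $g_n$ is used, and it contributes nothing to the integral. Since $U$ is open with $\vol_g(M\setminus U)=0$, the set $M\setminus U$ is closed with empty interior. A fixed curve may still lie in $M\setminus U$ on a set of positive length, so the perturbation is genuinely needed. Everything is done for a fixed $n$, so the constant $K_n$ with $g_n\le K_n g$ is available and only needs to be finite. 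The argument actually yields $d_{g_n}\le \sqrt{C}\,d_g$, which gives the stated bound once $C\ge 1$; I would record the $\sqrt{C}$ form.

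\textbf{Step 1 (near-minimizer).} Fix $x\neq y$ and $\varepsilon>0$. Choose a smooth embedded curve $\gamma:[0,1]\to M$ from $x$ to $y$ with $|\gamma'|_g\neq 0$ and $L_g(\gamma)\le d_g(x,y)+\varepsilon$. In dimension $m\ge 2$ embeddedness costs nothing after a generic perturbation. The case $m=1$ is immediate, since then $M\setminus U$ has $g$-length zero and the time spent there is already null.

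\textbf{Step 2 (a tube of parallel curves).} Using a normal frame along $\gamma$, build a smooth map
\[
F:[0,1]\times B^{m-1}_\delta\to M,\qquad F(t,s)=\exp_{\gamma(t)}\Big(\textstyle\sum_i s_i e_i(t)\Big),
\]
which is a diffeomorphism onto a tubular neighborhood for $\delta$ small. Its Jacobian is bounded below by some $J_0>0$. By Fubini,
\[
\int_{B_\delta}\big|\{t: F(t,s)\notin U\}\big|\,ds \le J_0^{-1}\vol_g(M\setminus U)=0 .
\]
Hence for a.e. $s$ the curve $\gamma_s=F(\cdot,s)$ satisfies $\gamma_s(t)\in U$ for a.e. $t$. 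By smooth dependence, $L_g(\gamma_s)\to L_g(\gamma)$ as $s\to 0$. Choose such an $s$ with $|s|$ small enough that $L_g(\gamma_s)\le L_g(\gamma)+\varepsilon$ and $d_g(x,\gamma_s(0)),\,d_g(y,\gamma_s(1))\le \varepsilon$.

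\textbf{Step 3 (estimate and closing the endpoints).} For this $\gamma_s$, the set of times spent outside $U$ is null, so
\[
L_{g_n}(\gamma_s)=\int_{\{\gamma_s\in U\}}\sqrt{g_n(\gamma_s',\gamma_s')}\,dt\le \sqrt C\,L_g(\gamma_s).
\]
This uses only that $g_n$ is bounded, so that null time sets carry zero length. Join $x$ to $\gamma_s(0)$ and $\gamma_s(1)$ to $y$ by short $g$-geodesic segments $\sigma_0,\sigma_1$. By the crude bound $g_n\le K_n g$, these have $g_n$-length at most $\sqrt{K_n}\,\varepsilon$. Concatenating gives a piecewise smooth curve with
\[
d_{g_n}(x,y)\le \sqrt C\,(d_g(x,y)+2\varepsilon)+2\sqrt{K_n}\,\varepsilon .
\]
Let $\varepsilon\to 0$ with $n$ fixed.

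\textbf{Main obstacle.} I expect the main difficulty to be Step 2: making the Fubini argument rigorous. This means choosing a family of curves whose union is an honest positive-Jacobian parametrization, so that measure-zero sets in $M$ pull back to measure-zero sets in $(t,s)$, while keeping the lengths and endpoints uniformly controlled. If a global tube is awkward, I would first try covering $\gamma$ by finitely many coordinate charts. In each chart I would use straight-line translates $t\mapsto \gamma(t)+s$, for which Fubini in $\R^m$ is immediate. I would then glue consecutive pieces with the same short-connector trick from Step 3, costing $O(\sqrt{K_n}\,\varepsilon)$ per chart.
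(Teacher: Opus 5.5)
Your proof is correct and follows the same route as the paper's. Both arguments foliate a thin normal tube around a (near-)minimizing $g$-curve, use $\vol_g(M\setminus U)=0$ with Fubini/coarea to select a leaf lying in $U$ for a.e.\ time, apply the pointwise bound there, and let the tube shrink. Your version is more careful on two points the paper glosses over: you join the selected leaf to $x$ and $y$ with short connectors controlled by $g_n\le K_n g$ (the paper never checks that its leaf actually joins $p$ to $q$), and you record the correct factor $\sqrt{C}$, since the paper's step $L_{g_n}\le C L_g$ holds only for $C\ge 1$ and the stated conclusion in fact fails for $C<1$ (take $g_n=Cg$, $U=M$).
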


For our last main theorem we turn to establishing uniform control on a sequence of distance functions from above. Here we are able to establish a theorem which we know not to be optimal by Theorem \ref{ex-Blowup Uniform no Lipschitz}. That being said, it allows for the sequence to blow up and gives a hypothesis which is geometrically natural. A similar result was obtained by the first and fourth named authors and Y. Torres \cite{BST} for warped product bounded rough Riemannian metrics. 

\begin{thm}\label{thm-Main Theorem Uniform Upper Bound}
  Let $g_n$ be a sequence of bounded rough Riemannian metrics, $g$ a Riemannian metric, and $M$ smooth, connected, and compact. If there exists a sequence of sets $U_n \subset M$ so that $\displaystyle M \setminus U_n$ can be decomposed into connected components $W_1,W_2,...$ where $\displaystyle\sum _{k=1}^{\infty} \diam(W_k) \le V_n $, $V_n \searrow 0$ as $n \rightarrow 0$ and there exists a $C > 0$ so that
    \begin{align}
        g_n(v,v) &\le Cg(v,v), \quad \forall v \in T_pM, p \in U_n,\label{eq-LowerBoundOnU}
        \\V_ng_n(v,v) & \le C_ng(v,v),\quad \forall v \in T_pM, p \in M \setminus U_n,\label{eq-LowerBoundOnUComplement}
    \end{align}
      where $C_n \searrow 0$ as $n \rightarrow \infty$, then
    \begin{align}
        d_{g_n}(x,y) \le  C d_g(x,y)+C_n.
    \end{align}
  
\end{thm}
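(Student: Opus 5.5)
Fix $x,y\in M$ and $\varepsilon>0$. Choose a piecewise smooth curve $\gamma$ from $x$ to $y$ with $L_g(\gamma)\le d_g(x,y)+\varepsilon$. The plan is to estimate $L_{g_n}(\gamma)$ by splitting $\gamma$ according to where it lies. On the portion in $U_n$ we use \eqref{eq-LowerBoundOnU}. On each bad component $W_k$ that $\gamma$ meets, we either replace the piece of $\gamma$ inside $W_k$ by a short curve, or bound its length using \eqref{eq-LowerBoundOnUComplement}. Then we let $\varepsilon\to0$.

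\textbf{Step 1: the good set.} For $t$ with $\gamma(t)\in U_n$ we have $\sqrt{g_n(\gamma',\gamma')}\le \sqrt{C}\sqrt{g(\gamma',\gamma')}$. Hence this portion contributes at most $\sqrt{C}\,L_g(\gamma)$. I would absorb the square root into the constant as the statement does, interpreting the claimed $C$ as the Lipschitz constant, or assume $C\ge1$ so that $\sqrt C\le C$.

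\textbf{Step 2: the bad components.} For each $W_k$ met by $\gamma$, let $t_k^-$ and $t_k^+$ be the first and last times $\gamma$ lies in $\overline{W_k}$. Replace $\gamma|_{[t_k^-,t_k^+]}$ by a curve $\sigma_k$ inside $\overline{W_k}$, or in a slight neighborhood of it, joining $\gamma(t_k^-)$ to $\gamma(t_k^+)$ with $L_g(\sigma_k)\lesssim \diam(W_k)$. By \eqref{eq-LowerBoundOnUComplement},
\[
L_{g_n}(\sigma_k)\le \sqrt{C_n/V_n}\,L_g(\sigma_k)\lesssim \sqrt{C_n/V_n}\,\diam(W_k).
\]
Summing over $k$ and using $\sum_k\diam(W_k)\le V_n$ gives a total bad contribution of at most $\sqrt{C_nV_n}$. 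The replacement only shortens the $g$-length of the remaining good portion, since we discard pieces of $\gamma$. This yields
\[
d_{g_n}(x,y)\le \sqrt{C}\,(d_g(x,y)+\varepsilon)+\sqrt{C_nV_n}.
\]
Since $C_n,V_n\searrow0$, the last term tends to zero and can be relabeled as the $C_n$ in the conclusion. Countably many components are handled by ordering the intervals $[t_k^-,t_k^+]$. If these intervals are nested or overlap, we keep only the maximal ones, and the resulting family of disjoint intervals still has diameters summing to at most $V_n$.

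\textbf{Main obstacle.} The hard step is Step 2's claim that two points of $W_k$ can be joined by a piecewise smooth curve of $g$-length comparable to $\diam(W_k)$ along which \eqref{eq-LowerBoundOnUComplement} applies. Here $\diam$ is presumably measured in $d_g$, and the hypothesis gives no control on the intrinsic geometry of $W_k$. I would first avoid the issue by staying in the ambient space. Join the two points by a $g$-minimizing geodesic of length at most $\diam(W_k)$, and split that geodesic again into the parts in $U_n$ and in $M\setminus U_n$. The part in $U_n$ is controlled by $\sqrt{C}$ times a length at most $\diam(W_k)$. The part in $M\setminus U_n$ is controlled by $\sqrt{C_n/V_n}\,\diam(W_k)$. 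Summing gives a bad contribution of at most $(\sqrt C+\sqrt{C_n/V_n})V_n\to0$, which avoids recursion. A secondary technicality is that $g_n$ is only measurable, so pointwise bounds on $U_n$ must be integrated along curves. I would either assume the bounds hold everywhere on the sets, as stated, or perturb curves slightly as in the proof of Theorem \ref{thm-Main Theorem Lipschitz Upper Bound}, where the density of $U$ was used.
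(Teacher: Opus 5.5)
\textbf{There is a genuine gap in how you assemble the replacements in Step 2.} Your fix for a single component is sound. A $g$-minimizing geodesic $\sigma_k$ from $\gamma(t_k^-)$ to $\gamma(t_k^+)$ satisfies $L_{g_n}(\sigma_k)\le(\sqrt{C}+\sqrt{C_n/V_n})\diam(W_k)$, whichever components it crosses.

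The problem is the claim that keeping the maximal intervals $[t_k^-,t_k^+]$ gives a disjoint family. That is false. If $\gamma$ meets $W_1,W_2,W_1,W_2$ in that order, the two intervals overlap, neither contains the other, and both are maximal. Merging them destroys your only estimate, because the endpoints of the merged interval no longer lie in a single $\overline{W_k}$. With infinitely many components there is a further problem: infinitely many replacements produce a curve with infinitely many breaks. Such a curve is not admissible in the definition of $d_{g_n}$, which uses piecewise smooth curves. Since $g_n$ is only measurable, you cannot approximate it by a piecewise smooth curve and keep control of $L_{g_n}$.

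\textbf{The missing idea is to avoid surgery altogether.} Take $\gamma$ to be a $g$-minimizing geodesic from $x$ to $y$, which exists because $g$ is smooth and $M$ is compact. Every sub-arc is then minimizing, so $L_g(\gamma|_{[t_k^-,t_k^+]})=d_g(\gamma(t_k^-),\gamma(t_k^+))\le\diam(W_k)$. Since $\gamma^{-1}(W_k)\subset[t_k^-,t_k^+]$ and the $W_k$ are disjoint, $b:=L_g(\gamma\cap(M\setminus U_n))\le\sum_k\diam(W_k)\le V_n$.

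This is exactly the inequality the paper's proof rests on. The paper splits $L_{g_n}(\gamma)$ over $U_n$ and $M\setminus U_n$ and bounds the second term by $\frac{C_n}{V_n}\sum_k\diam(W_k)$. However, it asserts this for an $\varepsilon$-almost minimizing curve, where it need not hold, since such a curve may wander inside a component. You were right that this is the crux, and the exact minimizer is what settles it. It also makes your concern about the intrinsic geometry of $W_k$ irrelevant, since you never need to travel inside $W_k$.

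\textbf{On the constants.} Your $\sqrt{C}$ remark is correct. However, relabeling $\sqrt{C}\,V_n+\sqrt{C_nV_n}$ as $C_n$ proves a weaker statement, because $C_n$ is fixed by the hypothesis. With the minimizer, set $a:=L_g(\gamma\cap U_n)$; then $L_{g_n}(\gamma)\le\sqrt{C}\,a+\sqrt{C_n/V_n}\,b$. Because $b\le V_n$, you have $\sqrt{C_n/V_n}\,b\le\sqrt{C_nb}\le\frac12(C_n+b)$. So for $C\ge1$ you recover exactly $d_{g_n}(x,y)\le C\,d_g(x,y)+C_n$.
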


In section \ref{sec-Background}, we review necessary background and fix notation which will be used throughout the rest of the paper.

In section \ref{sec-Examples}, we state and prove many examples which serve to illustrate the importance of the assumptions of the main theorems.

In section \ref{sec-Main Proofs}, we give the proof of the main theorems. We note that the proofs are fairly short which is justified by the necessity of the hypothesis, as illustrated by the examples.

\textbf{Acknowledgment:} We would like to thank the PSC CUNY Traditional B grant and the Lehman College STEM-IN grant for providing summer salary to all the authors on this project. We also thank the Lehman College Math Department, where the second through fourth named authors all studied mathematics as undergraduates, for its continued support. 

\section{Background}\label{sec-Background}
In this section we will review the necessary background material needed to understand the rest of the paper. We start by reviewing a particular class of conformal length spaces which is used when discussing each of the examples in section \ref{sec-Examples}. We then go on to generalize to Bounded Rough Riemannian metrics.

\subsection{Conformal Length Spaces}

By considering any piecewise smooth curve $\gamma:[0,1]\rightarrow [0,1]^2$ we can define a conformal length space on $[0,1]^2 \subset \R^2$ by being given a bounded and measurable function $f:[0,1]^2\rightarrow (0,\infty)$, which distorts the geometry of the plane, resulting in a new length, $L_f(\gamma)$, defined as
\begin{align}\label{def-WeightedLength}
 L_f(\gamma)&= \int_0^1  f(x(t),y(t))\sqrt{x'(t)^2+y'(t)^2}dt. 
\end{align}
The idea is that the length of curves will be distorted by the presence of a weighting factor $f(x(t),y(t))$, called a conformal factor, multiplying the speed. Then one can define a distance between points by minimizing the lengths of curves connecting $p$ and $q$
\begin{align}\label{def-WeightedDistance}
    d_f(p,q)=\inf\{L_f(\gamma): \gamma \text{ piecewise smooth}, \gamma(0)=p,\gamma(1)=q\}.
\end{align}
We notice that the Euclidean distance function is given by $d_1$ which we will denote throughout the paper by $d$. We will also denote the straight line connecting $p,q \in [0,1]^2$ as $\ell_{pq}$ throughout. Given two curves $\gamma, \alpha:[0,1]\rightarrow [0,1]^2$ so that $\gamma$ connects $p$ to $p'$ and $\alpha$ connects $p'$ to $q$, we use the notation $\gamma *\alpha:[0,1]\rightarrow [0,1]^2$ for the concatenation of these two curves.

Often times when studying examples we will be able to deduce a distance bound from a corresponding comparison of the conformal factors which we state and prove now.

\begin{thm}\label{thm-Distance Lower Bound Estimate}
    Show that for any functions $f,g:[0,1]^2\rightarrow (0,\infty)$ if $f(x,y) \ge g(x,y)$ $\forall (x,y) \in [0,1]^2$ then
    \begin{align}
        d_g(p,q) \le d_f(p,q),
    \end{align}
    for any $p,q \in [0,1]^2$.
\end{thm}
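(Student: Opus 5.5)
The plan is a direct comparison of lengths curve by curve, followed by passing to the infimum. First, fix $p,q \in [0,1]^2$ and an arbitrary piecewise smooth curve $\gamma:[0,1]\rightarrow[0,1]^2$, $\gamma(t)=(x(t),y(t))$, with $\gamma(0)=p$ and $\gamma(1)=q$. Since $\sqrt{x'(t)^2+y'(t)^2}\ge 0$ and $f(x(t),y(t))\ge g(x(t),y(t))$ for every $t$ by hypothesis, we have the pointwise inequality
\begin{align}
g(x(t),y(t))\sqrt{x'(t)^2+y'(t)^2} \le f(x(t),y(t))\sqrt{x'(t)^2+y'(t)^2}
\end{align}
at every $t$ where $\gamma'$ exists, that is, at all but finitely many $t$.

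Next, integrate over $[0,1]$, working on each smooth piece separately and summing. Both integrands are measurable, nonnegative and bounded, because $f,g$ are bounded and measurable and $\gamma'$ is bounded on each piece. Monotonicity of the integral then gives $L_g(\gamma)\le L_f(\gamma)$ by \eqref{def-WeightedLength}.

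Finally, combine this with \eqref{def-WeightedDistance}. Since $d_g(p,q)$ is the infimum of $L_g$ over the same class of admissible curves, we get $d_g(p,q)\le L_g(\gamma)\le L_f(\gamma)$ for every admissible $\gamma$. Thus $d_g(p,q)$ is a lower bound for $\{L_f(\gamma)\}$, and taking the infimum over $\gamma$ yields $d_g(p,q)\le d_f(p,q)$.

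There is no real obstacle here. The only point needing care is checking that the length integrals are well defined, namely measurability of $f\circ\gamma$ and finiteness, and this follows from boundedness and measurability as assumed in the definition of a conformal length space. The argument uses only that the admissible class of curves is the same for $f$ and $g$.
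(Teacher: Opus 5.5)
Your proposal is correct and follows the same route as the paper: the pointwise inequality $g\le f$ along $\gamma$ gives $L_g(\gamma)\le L_f(\gamma)$ for every admissible curve, and taking the infimum over the common class of curves yields $d_g(p,q)\le d_f(p,q)$. Your well-definedness remark is slightly overstated, since measurability of $f\circ\gamma$ needs $f$ to be Borel rather than merely Lebesgue measurable; the paper's setup likewise takes these lengths as given, so this does not affect the argument.
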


\begin{proof}
By the assumption that $f(x,y) \ge g(x,y)$ $\forall (x,y) \in [0,1]^2$ we find
\begin{align}
    \int_0^1 f(x(t), y(t)) \sqrt{x'(t)^2 + y'(t)^2} \, dt 
    \geq 
    \int_0^1 g(x(t), y(t)) \sqrt{x'(t)^2 + y'(t)^2} \, dt,
\end{align}
which can be rewritten as
\begin{align}
L_f(\gamma) \geq L_g(\gamma).
\end{align}

Taking the infimum over all paths \( \gamma \) connecting points \( p \) and \( q \), we obtain
\begin{align}
d_f(p,q) \geq d_g(p,q).    
\end{align}

\end{proof}

\subsection{Bounded Rough Riemannian Metrics}
Since our goal is simple to study the metric space properties of Riemannain metrics we will define a very weak class of metrics for which the metric space structure is defined. One should note that this means that the usual Hopf-Rinow theorem of Riemannian geometry will not be available to us when considering bounded rough Riemannian metrics. 

\begin{defn}\label{def-Rough Riemannian Metric}
 If we let $M$ be a smooth compact manifold then we let $g$ be called a \textbf{bounded rough Riemannian metric} if $g$ is a positive definite symmetric tensor on each tangent space $T_pM$, $p \in M$ so that in coordinates on $M$, $g$ is bounded above, uniformly bounded away from zero, and measurable. 
\end{defn} 
One should notice that we are following the definition given by L. Bandera \cite{BandaraRough} but we are forcing the singular set for the rough Riemannian metric to be empty by imposing the boundedness condition globally instead of just almost everywhere. Then we know that for any piecewise smooth curve $\gamma:[0,1]\rightarrow M$ that $\sqrt{g(\gamma',\gamma')}$ is bounded and measurable and hence $\gamma$ will have a well defined length. Hence we can define the length of $\gamma$ with respect to the bounded rough Riemannian metric $g$ to be
\begin{align}\label{def-RoughRiemannianLength}
 L_g(\gamma)&= \int_0^1  \sqrt{g(\gamma',\gamma')}dt,
\end{align}
and the distance between points $p,q \in M$ by minimizing the lengths of curves connecting $p$ and $q$
\begin{align}\label{def-RoughRiemannianDistance}
    d_g(p,q)=\inf\{L_g(\gamma): \gamma \text{ piecewise smooth}, \gamma(0)=p,\gamma(1)=q\}.
\end{align}
\subsection{Uniform Convergence}

First, we remind the reader of the definition of uniform convergence for a sequence of distance functions.

\begin{defn}\label{defn-Uniform Convergence}
    Let $d_j:X \times X \rightarrow (0,\infty)$ be a sequence of metrics and $d_0:X \times X\rightarrow (0,\infty)$ a metric. Then we say that $d_j \rightarrow d_0$ \textbf{uniformly} if 
    \begin{align}
        \sup_{p,q \in X} \left|d_j(p,q)-d_0(p,q) \right|\rightarrow 0.
    \end{align}
\end{defn}

\begin{rmrk}
    The definition of uniform convergence given in Definition \ref{defn-Uniform Convergence} is equivalent to the definition of convergence for a sequence in the metric space $(\mathcal{C},d_{sup})$ if we let $\mathcal{C}$ be the set of continuous functions from $X \times X \rightarrow (0,\infty)$ and for $f,g \in \mathcal{C}$ we define the distance between points to be 
    \begin{align}
        d_{sup}(f,g)= \sup_{p,q \in X}|f(p,q)-g(p,q)|.
    \end{align}
\end{rmrk}
We now prove a routine squeeze theorem for uniform convergence of distance functions which will be used throughout the paper.

\begin{thm}\label{thm-Squeeze}
     Let $d_j:X \times X \rightarrow (0,\infty)$ be a sequence of metrics and $d_0:X \times X\rightarrow (0,\infty)$ a metric. If for all $p,q \in X$ we find that
     \begin{align}\label{eq-squeeze equation}
         d_0(p,q) - C_j \le d_j(p,q) \le d_0(p,q)+C_j,
     \end{align}
     where $C_j$ is a sequence of real numbers so that $C_j \rightarrow 0$ then $d_j \rightarrow d_0$ uniformly.
\end{thm}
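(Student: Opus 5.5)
The plan is to rewrite the two-sided bound \eqref{eq-squeeze equation} as a single absolute-value estimate that holds uniformly in $p$ and $q$, and then pass to the limit using Definition \ref{defn-Uniform Convergence}.

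\textbf{Step 1: absolute-value bound.} Fix $j$ and arbitrary $p,q \in X$. Subtracting $d_0(p,q)$ throughout \eqref{eq-squeeze equation} gives
\begin{align}
-C_j \le d_j(p,q)-d_0(p,q) \le C_j .
\end{align}
These are precisely the two inequalities defining $|d_j(p,q)-d_0(p,q)| \le C_j$. Note also that the hypothesis forces $d_0(p,q) - C_j \le d_0(p,q) + C_j$, so $C_j \ge 0$ automatically.

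\textbf{Step 2: take the supremum.} The bound $C_j$ does not depend on $p$ or $q$. Hence the supremum over all pairs obeys the same bound:
\begin{align}
0 \le \sup_{p,q\in X}\left|d_j(p,q)-d_0(p,q)\right| \le C_j .
\end{align}

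\textbf{Step 3: pass to the limit.} Let $j\to\infty$. Since $C_j \to 0$, the elementary squeeze theorem for real sequences shows that the supremum tends to $0$. By Definition \ref{defn-Uniform Convergence}, this is exactly the statement that $d_j \to d_0$ uniformly.

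There is no real obstacle in this argument. The only point needing care is Step 2: the uniformity in $p$ and $q$ of the hypothesis is what allows the bound to pass to the supremum. If $C_j$ depended on the points, this step would fail.
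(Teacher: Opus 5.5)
Your proposal is correct and follows essentially the same route as the paper. Like the paper, you rewrite the two-sided bound as $|d_j(p,q)-d_0(p,q)|\le C_j$, take the supremum over $p,q$ using that $C_j$ does not depend on the points, and let $j\to\infty$.
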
 
\begin{proof}
Let $d_j$ be a sequence of metrics and $d_0$ be a metric such that for all $p,q \in X$ we find that 
    \begin{align}\label{eq-squeeze equation}
         d_0(p,q) - C_j \le d_j(p,q) \le d_0(p,q)+C_j .
    \end{align}
Note this is equivalent to $- C_j \le d_j(p,q) - d_0(p,q) \le C_j$, which implies,
    \begin{align}\label{eq-squeeze equation}
          |d_j(p,q) - d_0(p,q)| \le C_j.
    \end{align}
By taking the supremum over all $p,q \in X$ on both sides of \eqref{eq-squeeze equation} we find $\sup |d_j(p,q)-d_0(p,q)| \le C_j$, for any $p,q \in X, j \in \N$. Since $C_j$ is a sequence so that $ C_j\rightarrow0$, we can see
\begin{align}
    \sup_{p,q \in X} |d_j(p,q)-d_0(p,q)| \rightarrow 0.
\end{align}
Therefore, $d_j \rightarrow d_0$ uniformly.
\end{proof}

\subsection{Minimizing Classes of Curves}

In Section \ref{sec-Examples}, we will be studying examples of conformal Bounded Rough Riemannian metrics which are piecewise constant on finitely many regions of $[0,1]^2$. So we now make an observation about minimizing curves on metrics of this type.

\begin{lem}\label{lem-MinimizingCurvesArePiecewiseLinear} 
    Given a conformal factor $f:[0,1]^2 \rightarrow (0,\infty)$ which is  constant on each set $U_1,...,U_n$ so that
    \begin{align}
        \bigcup_{i=1}^nU_i=[0,1]^2, \qquad U_i\cap U_j = \emptyset, i \not = j,
    \end{align}
    and so that $\partial U_i$, $1\le i \le n$ is a rectangle (where we allow lines as degenerate rectangles) composed of at most four straight lines $\ell_i^1,\ell_i^2,\ell_i^3,\ell_i^4$ so that each line is either completely contained in or has trivial intersection with $U_i$. Furthermore, we require $\ell^j_i \in U_i$ if $f$ is smaller on $U_i$ then on the $U_k$ so that $\ell_i^j \in \partial U_k\cap \partial U_i$.
    
    If we let $\mathcal{P}$ be the set of piecewise linear curves which are linear on each rectangle $U_i$ then we find that for $p,q \in [0,1]^2$
    \begin{align}
        d_f(p,q)=\inf\{L_f(\gamma):\gamma:[0,1]\rightarrow [0,1]^2, \gamma \in \mathcal{P}, \gamma(0)=p,\gamma(1)=q\}.
    \end{align}
\end{lem}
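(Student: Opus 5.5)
The inequality ``$\le$'' is immediate: $\mathcal{P}$ is contained in the class of piecewise smooth curves, so the infimum over $\mathcal{P}$ is at least $d_f(p,q)$. For the reverse inequality, we take an arbitrary piecewise smooth $\gamma$ from $p$ to $q$ and an $\epsilon>0$, and we aim to produce $\tilde\gamma\in\mathcal{P}$ with $L_f(\tilde\gamma)\le L_f(\gamma)+\epsilon$. Taking the infimum over $\gamma$ and letting $\epsilon\to 0$ then gives the result.

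The construction goes through the times at which $\gamma$ changes region. Because the partition is finite and each $\partial U_i$ consists of finitely many line segments, we first reduce to curves that cross region boundaries finitely often. We do this by a small modification: a piecewise smooth curve can be perturbed, at arbitrarily small cost in $f$-length since $f$ is bounded, so that it meets each segment $\ell_i^j$ transversally or runs along it in finitely many intervals. After this, $[0,1]$ splits into finitely many subintervals $[t_k,t_{k+1}]$. On each subinterval, $\gamma$ either stays in the closure of a single region $U_i$ or travels along a boundary segment. On such a subinterval we replace $\gamma$ by the straight segment from $\gamma(t_k)$ to $\gamma(t_{k+1})$.

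The key comparison on each piece uses convexity. If $\gamma([t_k,t_{k+1}])$ lies in a rectangle whose closure is convex, the straight segment stays in that closed rectangle, and its Euclidean length is at most that of $\gamma$. We must still check that the segment is weighted by a value of $f$ no larger than the one $\gamma$ was weighted by. In the interior of $U_i$ this holds because $f$ is constant there. On the boundary, the hypothesis that each boundary line $\ell_i^j$ belongs to the region with the smaller value of $f$ ensures the following: a segment that runs along a shared edge is charged the minimum of the two adjacent constants. This is no more than the charge for the arc $\gamma$ it replaces, which lay in one of the two regions. Summing over the pieces, together with the $\epsilon$ from the perturbation step, gives $L_f(\tilde\gamma)\le L_f(\gamma)+\epsilon$. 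The curve $\tilde\gamma$ is piecewise linear and linear on each rectangle, so it lies in $\mathcal{P}$.

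I expect the main obstacle to be the first reduction: controlling a piecewise smooth curve that oscillates infinitely often across a boundary line. I would handle it without perturbing, by a first/last exit argument. For each region, let $s$ be the first time $\gamma$ enters the closed rectangle $\overline{U_i}$ and $s'$ the last time it leaves it. The portion $\gamma|_{[s,s']}$ is then replaced by the chord from $\gamma(s)$ to $\gamma(s')$, which stays inside the convex set $\overline{U_i}$. The question is whether this shortcut costs no more than the portion it removes. Since $\gamma|_{[s,s']}$ may wander through regions with smaller $f$, this comparison may fail. If so, I would iterate instead over the region of smallest $f$ first, then the next smallest, and so on. The finiteness of the partition makes the procedure terminate, and it leaves a curve that visits each region in finitely many intervals, to which the chord replacement of the previous paragraph applies.
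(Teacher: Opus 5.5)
Your core step (split $[0,1]$ into finitely many pieces, replace each piece by its chord, and compare costs using convexity of the rectangles and the rule that a shared edge carries the smaller adjacent value of $f$) is the same as the paper's argument, which replaces the arc between consecutive crossings of $\partial U_i$ by a chord in $\overline{U_i}$. For this step to work, each piece must have $\gamma$ in the \emph{open} rectangle except at its endpoints. A piece that merely stays in $\overline{U_i}$ can run along an edge charged a smaller neighboring value, and then its chord through the interior costs more. The paper tacitly assumes there are finitely many crossings. You rightly flag this as the real difficulty, but neither of your reductions handles it.

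The perturbation is not cheap ``since $f$ is bounded''. Because $f$ jumps across edges, $f\circ\gamma$ and $f\circ\tilde\gamma$ can differ by a full jump on a set of times of positive measure, however $C^1$-close $\tilde\gamma$ is to $\gamma$. For $\bar f_\infty$ of Example~\ref{thm-Explicity Distance Function Half}, a $C^1$ curve can lie on the line $x=\frac12$ for a fat Cantor set of times, with an excursion into $\{x<\frac12\}$ in every gap. Moving a positive-measure set of those times off the line, in either direction, raises the cost by $\frac12$ per unit length. Removing the crossings without loss would require flattening excursions onto edges, including near vertices where several edges and regions meet. That is exactly what remains to be proved.

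The first/last-exit fallback is false, whatever order you process the regions in. Take $\bar f_\infty$ and let $U_L=[0,\frac12)\times[0,1]$, $p=(\frac12-a,0)$, $q=(\frac12-a,1)$ with $0<a<\frac14$. The curve $\ell_{pp'}*\ell_{p'q'}*\ell_{q'q}$, with $p'=(\frac12,0)$ and $q'=(\frac12,1)$, has length $2a+\frac12$. Processing the line first leaves this curve unchanged. For $U_L$ (open or closed), the first entry and last exit are $t=0$ and $t=1$, so you replace the whole curve by $\ell_{pq}$, which has length $1>2a+\frac12$. Near-minimizing curves may visit an expensive region several times, using a cheaper region in between. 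So no scheme that merges all visits to a region into one chord can be length-nonincreasing.

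One way to avoid counting crossings altogether:
\begin{enumerate}
\item Let $d_{\mathcal P}$ be the infimum of $L_f$ over finite polygonal curves. This equals the infimum over $\mathcal P$ by your chord step, since a polygon has only finitely many pieces.
\item $d_{\mathcal P}$ satisfies the triangle inequality and $d_{\mathcal P}(y,z)\le(\sup f)|y-z|$. Hence $\phi(t)=d_{\mathcal P}(p,\gamma(t))$ is Lipschitz.
\item Show $|\phi'(t)|\le f(\gamma(t))|\gamma'(t)|$ for a.e.\ $t$. This is clear when $\gamma(t)$ lies in an open rectangle. At a density point of $\gamma^{-1}(\sigma)$, with $\sigma$ an open edge, $\gamma(t+h)$ is within $o(h)$ of some $\gamma(s)\in\sigma$; that point is reached from $\gamma(t)$ along $\sigma$ at cost $f|_\sigma|\gamma(t)-\gamma(s)|$. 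At a.e.\ time spent at a vertex, $\gamma'(t)=0$.
\item Integrating gives $d_{\mathcal P}(p,q)\le L_f(\gamma)$.
\end{enumerate}
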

\begin{proof}
Let $p,q \in [0,1]^2$ and $\gamma:[0,1]\rightarrow [0,1]^2$ be any piecewise continuous path connecting $p$ to $q$. Let $U_i$ be so that $\gamma([0,1])\cap U_i \not = \emptyset$. If $\gamma$ is completely contained in $U_i$ then we can replace with a straight line completely contained in $U_i$ with shorter length since the conformal factor is constant on $U_i$. 

If $\gamma$ is not completely contained in $U_i$, and only intersects the boundary of $U_i$ once at $p' \in \partial U_i$ then either $p$ or $q$ is in $U_i$. Without loss of generality say $p$, and we can replace the part of the curve connecting $p$ to $p'$ with a straight line curve or a piecewise linear curve that is linear on $U_i$ which will only have less length  than $\gamma$. Hence we can replace the portion of $\gamma$ connecting $p$ to $p'$ with a curve  in the class $\mathcal{P}$. It is important here that we have required linear sides of $U_i$ to be contained in the shorter of the two rectangles it is contained in since otherwise one would have to consider piecewise linear curves whose breaks occurred at the interior of some neighboring rectangle.

If $\gamma$ intersects $\partial U_i$ two or more times then let $p',q'$ be two points where $\gamma$ intersects $\partial U_i$ which occur consecutively in parameterized time of $\gamma$. Since $f$ is constant on $U_i$ then we know that we can replace the portion of $\gamma$ connecting $p'$ to $q'$ with a shorter curve between $p'$ and $q'$ inside $\overline{U}_i$, consisting of either the straight line connecting $p'$ to $q'$ or a piecewise linear curve connecting $p'$ to $q'$ whose non-continuous breaks occur at points in $\partial U_i$.  So we can replace the portion of $\gamma$ connecting $p'$ to $q'$ with a curve from the class $\mathcal{P}$ which is shorter than the portion of $\gamma$ connecting $p'$ to $q'$. By continuing this procedure for each $U_i$ which intersects with $\gamma$ we will replace $\gamma$ by a curve in the class $\mathcal{P}$, which implies the desired conclusion.
\end{proof}

\subsection{Conformal Metric Space Examples}

In this section we investigate two examples of length metric spaces defined by conformal factors. We have chosen these particular examples because they will be the limits of a sequence of bounded rough Riemannian metrics of examples given in section \ref{sec-Examples}.

\begin{ex} 
The degenerate distance function defined with respect to the conformal factor
    \begin{align}
    \hat{f}_{\infty}(x,y)=
    \begin{cases}
       0 & x=\frac{1}{2}, 0 \le y \le 1
        \\ 1 & \text{otherwise}
    \end{cases}
\end{align}
is given by
\begin{align}\label{eq-Quotient Metric Space}
  d_{\hat{f}_\infty}(p,q) = \inf\{|x_0-\frac{1}{2}|+|x_1-\frac{1}{2}|,d(p,q)\}.  
\end{align}
If we define points in $P=\{(\frac{1}{2},y):y \in [0,1]\}$ to be identified as one point, then the induced quotient metric space is given by \eqref{eq-Quotient Metric Space}.
\end{ex}
\begin{proof}

Let $p=(x_0,y_0)$ and $q=(x_1,y_1)$. We will proceed by considering cases.

\textbf{Case 0:} if $x_1,x_0 = \frac{1}{2}$ then $L_{\hat{f}_{\infty}}(\ell_{pq})=0$ and hence $d_{\hat{f}_\infty}(p,q) = 0$.

\textbf{Case 1:} Let $p,q \in \left[ 0, \frac{1}{2} \right] \times[0,1 ]$ consider $\gamma$ to be a path connecting $p$ to $q$ such that $\gamma([0,1]) \cap\{\frac{1}{2}\}\times [0,1] = \emptyset$. We know that by Lemma \ref{lem-MinimizingCurvesArePiecewiseLinear}  that $\ell_{pq}$ is the shortest path amongst all curves $\gamma$ so we have

\begin{align} \label{eq-3.14FirstLowerBound}
   L_{\hat{f}_\infty}(\gamma) \ge L_{\hat{f}_\infty}(\ell_{pq}) = d(p,q).
\end{align}

We can also obtain an upper bound

\begin{align} \label{eq-3.14FirstUpperBound}
    d_{\hat{f}_\infty}(p,q) \le L_{\hat{f}_\infty}(\ell_{pq})=d(p,q)
\end{align}

The other type of path to consider is when $\gamma([0,1]) \cap\{\frac{1}{2}\}\times [0,1] \not = \emptyset$. Denote $p' = (\frac{1}{2},y_0)$ and $q' = (\frac{1}{2}, y_1)$, which will be intersections to the mid line. Let $\gamma_1$ to be the path from $p$ to $p'$, $\gamma_2 $ from $p'$ to $q'$, and $\gamma_3$ from $q'$ to $q$. So we estimate a lower bound

\begin{align} 
    L_{\hat{f}_\infty}(\gamma) &= L_{\hat{f}_\infty}(\gamma_1) + L_{\hat{f}_\infty}(\gamma_2) + L_{\hat{f}_\infty}(\gamma_3)\label{eq-3.14SecondLowerBound}
   \\& \ge L_{\hat{f}_\infty}(\ell_{pp'})+ L_{\hat{f}_\infty}(\ell_{p'q'})+L_{\hat{f}_\infty}(\ell_{q'q})
   \\& = L_{\hat{f}_\infty}(\ell_{pp'})+ L_{\hat{f}_\infty}(\ell_{q'q})
  \\ & = \left|x_0 - \frac{1}{2}\right|+\left|x_1-\frac{1}{2}\right|.\label{eq-3.14SecondLowerBound2}
\end{align}

To obtain an upper bound, we can calculate 

\begin{align} \label{eq-3.14SecondUpperBound}
    d_{\hat{f}_\infty}(p,q) \le L_{\hat{f}_\infty}(\ell_{pp'})+ L_{\hat{f}_\infty}(\ell_{qq'}) = \left|x_0 - \frac{1}{2}\right|+\left|x_1-\frac{1}{2}\right|.
\end{align}

So by taking the infimum over \eqref{eq-3.14FirstLowerBound} and \eqref{eq-3.14SecondLowerBound}-\eqref{eq-3.14SecondLowerBound2} we find that, 
\begin{align}\label{eq-Almostinf}
    L_{\hat{f}_\infty}(\gamma) \ge \inf\{|x_0-\frac{1}{2}|+|x_1-\frac{1}{2}|,d(p,q)\}.
\end{align}

Now by taking the infimum \eqref{eq-Almostinf} we find the estimate
\begin{align}
    d_{\hat{f}_\infty}(p,q) \ge \inf \{|x_0-\frac{1}{2}|+|x_1-\frac{1}{2}|,d(p,q)\}
\end{align}

\textbf{Case 2:} Let $p,q \in \left[  \frac{1}{2},1 \right] \times \left[0,1 \right] $ should be symmetric to Case 1. 

\textbf{Case 3:}  Let $ p \in \left[ 0, \frac{1}{2} \right] \times \left[0,1 \right]$  and $q\in \left[ \frac{1}{2} ,1\right] \times [0,1]$ by construction of the path it will always be the case that $\gamma([0,1]) \cap\{\frac{1}{2}\}\times [0,1] \not = \emptyset$. Notice since the hypotenuse is always larger than the side of a triangle, we have that $\inf\{|x_0-\frac{1}{2}|+|x_1-\frac{1}{2}|, d(p,q)\}=|x_0-\frac{1}{2}|+|x_1-\frac{1}{2}|$ in this case. Let $p'=(\frac{1}{2}, y_0)$ and $q' = (\frac{1}{2}, y_1)$, and define $\gamma_1$ be the path from p to $p'$, $\gamma_2$ will be from $p'$ to $q'$ and then $\gamma_3$ from $q'$ to $q$. We know by that for the curve that intersects the line and by Lemma $\ref{lem-MinimizingCurvesArePiecewiseLinear}$ that it is the case that minimizing the curve will be of the form $\ell_{p,p'}*\ell_{p'q'}*\ell_{q' q}$, then we can estimate

\begin{align} 
L_{\hat{f}_\infty}(\gamma) &= L_{\hat{f}_\infty}(\gamma_1) + L_{\hat{f}_\infty}(\gamma_2) + L_{\hat{f}_\infty}(\gamma_3) \label{eq-LB1}
\\& \ge  L_{\hat{f}_\infty}(\ell_{pp'}) + L_{\hat{f}_\infty}(\ell_{q'q})
\\&= |x_0 - \tfrac{1}{2}| + |x_1 - \tfrac{1}{2}|.\label{eq-LB2}
\end{align}

We can also estimate the upper bound

\begin{align} \label{eq-UB}
    d_{\hat{f}_\infty}(p,q) \le L_{\hat{f}_\infty}(\ell_{pp'})+L_{\hat{f}_\infty}(\ell_{q'q}) = |x_0 - \tfrac{1}{2}| + |x_1 - \tfrac{1}{2}|.
\end{align}

By taking the infimum in \eqref{eq-LB1}-\eqref{eq-LB2} we obtain

\begin{align} \label{eq-finalLB}
    d_{\hat{f}_\infty}(p,q) \ge \inf \{ |x_0 - \tfrac{1}{2}| + |x_1 - \tfrac{1}{2}|\}
\end{align}

Similarly by taking the infimum in \eqref{eq-UB}
we obtain

\begin{align} \label{eq-finalUB}
d_{\hat{f}_\infty}(p,q) \le \inf \{ |x_0 - \tfrac{1}{2}| + |x_1 - \tfrac{1}{2}| \}    
\end{align}

Hence by combining \eqref{eq-finalLB} and \eqref{eq-finalUB} we have the desired result.

\end{proof}

\begin{ex} \label{thm-Explicity Distance Function Half}
Let $p,q \in [0,1]^2$, $p = (x_1,y_1)$, $q=(x_2,y_2)$ and define $M = \{(\frac{1}{2},y) | \ y\in [0,1]\}$. We will define a class of curves  $\mathcal{M}=\{\phi_{pq} = \ell_{pp'} *\ell_{p'q'} * \ell_{q'q}: p',q' \in M\}$ so that the distance function defined with respect to the conformal factor
    \begin{align}
    \bar{f}_{\infty}(x,y)=
    \begin{cases}
       \frac{1}{2} & x=\frac{1}{2}, 0 \le y \le 1
        \\ 1 & \text{otherwise}
    \end{cases}
\end{align}
is given by
\begin{align}
    d_{\bar{f}_\infty}(p,q) &=\inf\left(\{  L(\ell_{pp'})+ \frac{1}{2}L(\ell_{p'q'})+L(\ell_{q'q}): \phi_{pq} \in \mathcal{M}\}\cup\{d(p,q)\}\right)
      \\&= \inf\left(\{  d(p,p')+ \frac{1}{2}d(p',q')+d(q',q) : p', q' \in M \}\cup\{d(p,q)\}\right).
\end{align}
\end{ex}

\begin{proof}
Let $p = (x_1,y_1)$, $q=(x_2,y_2)$. Then we will proceed by considering cases.

\textbf{Case 1 :} Let $p,q \in [0,\frac{1}{2}]\times [0,1]$. Suppose $\gamma$ to be a path connecting the points $p$ and $q$ such that $\gamma([0,1]) \cap (\{\frac{1}{2}\}\times[0,1])=\emptyset$. Note that if both $p,q \in M$ then this case does not apply. Now by Lemma $\ref{lem-MinimizingCurvesArePiecewiseLinear}$ the only path we need to consider is the straight line connecting $p$ to $q$ and hence
\begin{align}\label{eq-FirstLowerBound}
    L_{\bar{f}_\infty}(\gamma) \ge L_{\bar{f}_\infty}(\ell_{pq}) = d(p,q).
\end{align}

To demonstrate an upper bound we can calculate
\begin{align}\label{eq-FirstUpperBound}
 d_{\bar{f}_{\infty}}(p,q) \le L_{\bar{f}_\infty}(\ell_{pq}) =d(p,q).
\end{align}

Now consider a path $\gamma$, such that $\gamma([0,1]) \cap (\{\frac{1}{2}\}\times[0,1])\not =\emptyset$. Using Lemma \ref{lem-MinimizingCurvesArePiecewiseLinear}, we will consider the class of curves $\mathcal{M}$ to estimate a lower bound
\begin{align}\label{eq-secondLowerBound}
L_{\bar{f}_\infty}(\gamma) \ge L_{\bar{f}_\infty}(\phi_{pq})=d(p,p')+ \frac{1}{2}d(p',q')+d(q',q).
\end{align}

To show an upper bound, we can do the following. 
\begin{align}\label{eq-SecondUpperBound}
   d_{\bar{f}_{\infty}}(p,q)\le L_{\bar{f}_\infty}(\phi _{pq}) =   d(p,p')+ \frac{ 1}{2}d(p',q')+d(q',q).
\end{align}

By taking the inf over \eqref{eq-FirstLowerBound} with \eqref{eq-secondLowerBound} we find
\begin{align}
  d_{\bar{f}_\infty}(p,q) \ge\inf\left(\{  d(p,p')+ \frac{1}{2}d(p',q')+d(q',q): p',q' \in M\}\cup\{d(p,q)\}\right).
\end{align}

By combining \eqref{eq-FirstUpperBound} with \eqref{eq-SecondUpperBound} we find
\begin{align}
    d_{\bar{f}_{\infty}}(p,q) \le  \inf\left(\{  d(p,p')+ \frac{1}{2}d(p',q')+d(q',q): p',q' \in M\}\cup\{d(p,q)\}\right).
\end{align}

Both of these estimates together imply the desired result.

\textbf{Case 2:} Let $p,q \in [\frac{1}{2}, 1] \times [0,1]$ This case should be symmetric to case 1. 

\textbf{Case 3:} Let $p \in [0,\frac{1}{2}] \times [0,1]$ and $q \in [\frac{1}{2}, 1] \times [0,1]$. Let $\gamma$ be a path   connecting $p$ and $q$ and take $\mathcal{M}$ be a class of curves defined earlier. Notice that it will always be true that $\gamma([0,1]) \cap (\{\frac{1}{2}\}\times[0,1])\not =\emptyset$. Notice that if we choose $p'=q'$ then there is a choice of $p'$ so that the distance between $p$ and $q$ can be described as, $d(p,q) = d(p,p')+ \frac{1}{2}d(p',q')+d(q',q)$ which is the length of a curve in the class $\mathcal{M}$. By Lemma \ref{lem-MinimizingCurvesArePiecewiseLinear}, we can calculate a lower bound 

\begin{align} \label{eq-c3lowerbound}
    L_{\bar{f}_\infty}(\gamma) \ge L_{\bar{f}_\infty}(\phi_{pq})= d(p,p')+ \frac{1}{2}d(p',q')+d(q',q).
\end{align}

We can also obtain an upper bound, 

\begin{align} \label{eq-c3Upperbound}
    d_{\bar{f}_\infty}(p,q) \le L_{\bar{f}_\infty}(\phi_{pq}) = d(p,p')+ \frac{1}{2}d(p',q')+d(q',q).
\end{align}

By taking the inf over \eqref{eq-c3lowerbound} we obtain

\begin{align} \label {eq-finalc3lowerbound}
     d_{\bar{f}_\infty}(p,q) \ge \inf \{  d(p,p')+ \frac{1}{2}d(p',q')+d(q',q):p',q'\in M\}. 
\end{align}

Similarly by taking the inf over \eqref{eq-c3Upperbound} we obtain

\begin{align} \label{eq-finalc3Upperbound}
     d_{\bar{f}_\infty}(p,q) \le \inf \{  d(p,p')+ \frac{1}{2}d(p',q')+d(q',q):p',q'\in M\}. 
\end{align}

By combining \eqref{eq-finalc3lowerbound} and \eqref{eq-finalc3Upperbound} we have the desired results.

\end{proof}

\section{Examples of Sequences of Metric Spaces}\label{sec-Examples}

In this section we expand upon the examples considered by the first named author and C. Sormani in \cite{Allen-Sormani, Allen-Sormani-2}. In partciular, the examples are chosen to further probe the assumption of $C^0$ converence from below of the first named author, R. Perales, and C. Sormani \cite{Allen-Perales-Sormani} as well as to better understand the effects of assumptions on Lipschitz bounds.

\subsection{Blow Up Examples}\label{subsec-Blowup Examples}
In this subsection we consider metrics which blow up on regions. Due to the fact that the distance function takes the infimum over the lengths of all curves, one expects to allow for wilder blow ups of the conformal factor while still being able to control the distance function.

\subsubsection{Blow up on a Disappearing Center Square}
Here we see that when a sequence of conformal factors blows up on a disappearing square the blow up does not effect the limiting distance function.

\begin{thm}\label{ex-Blowup Uniform no Lipschitz} 
    For the sequence of functions defined by \begin{align}
    g_n(x,y)=
    \begin{cases}
       n^{\alpha} & \frac{n-1}{2n}\le x \le\frac{n+1}{2n}, \frac{n-1}{2n}\le y \le \frac{n+1}{2n} 
        \\ 1 & \text{otherwise}
    \end{cases}
\end{align}
where $\alpha \in (0,1)$ we find that $d_{g_n}$ converges uniformly to the Euclidean metric $d$   and does not satisfy a Lipschitz bound.
\end{thm}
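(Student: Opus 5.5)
Since $g_n\ge 1$ everywhere, Theorem \ref{thm-Distance Lower Bound Estimate} immediately gives $d(p,q)\le d_{g_n}(p,q)$. For the matching upper bound I will build, for each pair $p,q\in[0,1]^2$, an explicit curve whose $g_n$-length exceeds $d(p,q)$ by at most $C_n\to 0$. Write $S_n=[\frac{n-1}{2n},\frac{n+1}{2n}]^2$ for the center square, of side length $\frac1n$.

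\textbf{Upper bound.} If the segment $\ell_{pq}$ misses the interior of $S_n$, then $d_{g_n}(p,q)\le L_{g_n}(\ell_{pq})=d(p,q)$. Otherwise I split $\ell_{pq}$ at its first and last points $p',q'$ on $\partial S_n$, so $\ell_{pq}=\ell_{pp'}*\ell_{p'q'}*\ell_{q'q}$.

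There are two ways to handle the middle piece:
\begin{itemize}
\item Keep $\ell_{p'q'}$. Its $g_n$-length is at most $n^\alpha\cdot\frac{\sqrt2}{n}=\sqrt2\,n^{\alpha-1}$.
\item Replace $\ell_{p'q'}$ by a path along $\partial S_n$. Its length is at most $\frac{2}{n}$, since half the perimeter suffices.
\end{itemize}
Boundary points get weight $n^\alpha$ under the closed-interval definition, so a path on $\partial S_n$ is not automatically cheap. I will therefore use a curve running just outside $S_n$, at distance $\epsilon$, and let $\epsilon\to0$; its $g_n$-length is then at most $\frac2n+O(\epsilon)$.

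Either choice gives
\begin{align}
d_{g_n}(p,q)\le d(p,p')+d(q',q)+\tfrac{2}{n}\le d(p,q)+\tfrac2n .
\end{align}
So $d\le d_{g_n}\le d+\frac2n$, and Theorem \ref{thm-Squeeze} yields uniform convergence. The first option alone would still give convergence, because $n^{\alpha-1}\to0$ when $\alpha<1$; the detour argument shows the conclusion does not even need this.

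\textbf{Failure of a Lipschitz bound.} I read this as saying there is no constant $C$ with $d_{g_n}\le C d$ for all $n$ and all $p,q$. I will take both points inside $S_n$, for instance $p$ the center of $S_n$ and $q=p+(\frac{\delta}{n},0)$ with $\delta$ small.

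Any curve from $p$ to $q$ either stays in $S_n$ or reaches $\partial S_n$.
\begin{itemize}
\item A curve staying in $S_n$ has length at least $n^\alpha d(p,q)$.
\item A curve reaching $\partial S_n$ has length at least $n^\alpha\cdot\frac{1}{2n}\ge n^\alpha d(p,q)$ once $\delta\le\frac12$, because it must travel distance $\frac1{2n}$ inside $S_n$ just to get there.
\end{itemize}
Hence
\begin{align}
\frac{d_{g_n}(p,q)}{d(p,q)}\ge n^\alpha\to\infty .
\end{align}
This shows no uniform Lipschitz bound holds. It is also consistent with Theorem \ref{thm-Main Theorem Lipschitz Upper Bound}, because $g_n$ is unbounded on a set of positive volume.

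\textbf{Main obstacle.} The delicate step is the upper bound when $\ell_{pq}$ passes through $S_n$. There I must state precisely how the detour is made rigorous, using the outside-approximation for the boundary convention, and check that the $\frac2n$ estimate holds uniformly in $p$ and $q$, including when $p$ or $q$ lies inside $S_n$. That last case is handled by taking $p'=p$ or $q'=q$ and charging at most $\sqrt2\,n^{\alpha-1}$ for the part of the path inside $S_n$. This again tends to $0$ precisely because $\alpha<1$, and it is the one place where the hypothesis $\alpha<1$ is genuinely used.
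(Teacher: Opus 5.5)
Your proof is correct. It follows the paper's overall plan: $g_n\ge 1$ gives the lower bound by Theorem \ref{thm-Distance Lower Bound Estimate}, explicit competitor curves give the upper bound, and the center of $S_n$ paired with a nearby point shows the Lipschitz bound fails. Your upper bound, however, is organized differently, and the difference matters.

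The paper splits according to whether $\ell_{pq}$ passes through the single point $(\frac12,\frac12)$. It then picks $N$ large enough, depending on $(p,q)$, that $\ell_{pq}$ misses $S_n$ (or that $p,q\notin S_n$). As written, this controls $d_{g_n}(p,q)$ only for $n\ge N(p,q)$. That is pointwise control, not the $(p,q)$-independent $C_j$ that Theorem \ref{thm-Squeeze} needs.

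You split instead on whether $\ell_{pq}$ meets $S_n$ for the given $n$. By convexity, $\ell_{pq}\cap S_n$ is one segment of length at most $\sqrt2/n$. This gives $0\le d_{g_n}(p,q)-d(p,q)\le \sqrt2\,n^{\alpha-1}$ for all $p,q$ at once, which is exactly what the squeeze theorem requires.

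Your detour just outside $S_n$ is not needed for the theorem, but it adds information. Pairs with both endpoints outside $S_n$ cost at most $\frac2n$, so pairs with an endpoint in $S_n$ are the only place where $\alpha<1$ matters.

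In the Lipschitz part, your choice of points is essentially the paper's. You also justify $d_{g_n}(p,q)\ge n^\alpha d(p,q)$ by noting that any curve must spend Euclidean length $\frac1{2n}$ inside $S_n$ before it can leave. The paper only asserts this value.

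Three small corrections:
\begin{itemize}
\item Delete the sentence saying the detour shows the conclusion does not need $\alpha<1$. The detour only handles $p,q\notin S_n$. When $p$ is the center, every curve pays at least $\frac{n^\alpha}{2n}$ just to exit $S_n$, which is why uniform convergence fails for $\alpha\ge1$ (Theorem \ref{ex-Blowup No Uniform}). Your final paragraph says this correctly, so as written the proposal contradicts itself.
\item The first case should read ``misses $S_n$'' rather than ``misses the interior of $S_n$''. A segment running along an edge of the closed square is weighted $n^\alpha$ on a piece of length up to $\frac1n$. Such segments belong in your second case, where they cost at most $n^{\alpha-1}$.
\item ``Either choice gives $\frac2n$'' should say that the better of the two choices does. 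Keeping $\ell_{p'q'}$ only gives $\sqrt2\,n^{\alpha-1}$.
\end{itemize}
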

\begin{proof}
We know $d(p,q)\leq d_{g_n}(p,q)$ by Theorem \ref{thm-Distance Lower Bound Estimate} since $1\leq g_n(x,y)$ for all $(x,y) \in [0,1]^2$.

\textbf{Case 1: $(p,q) \in \{(a,b)\in[0,1]^2\times[0,1]^2: \ell_{ab} \cap \{(\frac{1}{2},\frac{1}{2})\} = \emptyset\}$} 

 Notice that since $[\frac{n-1}{2n},\frac{n+1}{2n}] = [\frac{1}{2}-\frac{1}{2n},\frac{1}{2}+\frac{1}{2n}]$, then $(\frac{1}{2},\frac{1}{2})$ will be the only point in the set for every $n\in \N$. So there is some $N \in \N$ large enough so that for all $n \ge N$ we find $\ell_{pq}\cap[\frac{n-1}{2n},\frac{n+1}{2n}]^2=\emptyset$. Hence for $n \ge N$ we find that the length of the line connecting $p$ to $q$ is given by
 \begin{align}
     L_{g_n}(\ell_{pq})=L(\ell_{pq})=d(p,q).
 \end{align}
 Since $d_{g_n}(p,q) \le L_{g_n}(\ell(t))\leq d(p,q)$, we have obtained the desired upper bound in this case.
 
\textbf{Case 2: $(p,q) \in \{(a,b)\in[0,1]^2\times[0,1]^2: \ell_{ab} \cap \{(\frac{1}{2},\frac{1}{2})\} \not= \emptyset\}$} 

 If neither $p$ or $q$ are the point $(\frac{1}{2},\frac{1}{2})$ then, consider the length of the path $\phi(t)=(t,t), \quad t\in [\frac{n-1}{2n},\frac{n+1}{2n}]$ connecting the sequence of points $c_n=(\frac{n-1}{2n},\frac{n-1}{2n})$ with  $d_n=(\frac{n+1}{2n},\frac{n+1}{2n})$. We can calculate this distance to be $L_{g_n}(\phi) = \frac{2n^{\alpha}}{n} = C_n$. Denote $r_{0,n} = (a_{0,n},b_{0,n})$ to be the first time $\ell_{pq}$ enters $[\frac{n-1}{2n}, \frac{n+1}{2n}]^2$ and $r_{1,n}=(a_{1,n},b_{1,n})$ be the point of $\ell_{pq}$ exiting this region. Since for some large $N$ we know $p$ and $q$ aren't in $[\frac{n-1}{2n},\frac{n+1}{2n}]$, we know the path $\ell_{pr_{0,n}},\ell_{r_{1,n}q}\cap (\frac{n-1}{2n},\frac{n+1}{2n})^2 = \emptyset$. By definition of $r_{0,n}$, we know $a_{0,n}, b_{0,n}, a_{1,n}, b_{1,n} \in [\frac{n-1}{2n},\frac{n+1}{2n}]$ $\forall n \in\N$ so by the Squeeze Theorem, we know these sequences converge to a $\frac{1}{2}$. So then  $r_{0,n},r_{1,n}\rightarrow (\frac{1}{2},\frac{1}{2})$. Using this fact, we can see that $L(\ell_{pr_{0,n}}(t))+ C_n+L(\ell_{r_{1,n}q}(t)) \rightarrow d$ by
\begin{align}
    L_{g_n}(\ell_{pq}) &= L(\ell_{pr_{0,n}}) + L_{g_n}(\ell_{r_{0,n}r_{1,n}}) + L(\ell_{r_{1,n}q})
    \\&\leq L(\ell_{pr_{0,n}})+ C_n+L(\ell_{r_{1,n}q})
    \\&=d(p,q) - \left(\sqrt{|x_0-\frac{1}{2}|^2+|y_0-\frac{1}{2}|^2}+\sqrt{|x_1-\frac{1}{2}|^2+|y_1-\frac{1}{2}|^2}\right) + L(\ell_{pr_{0,n}})
    \\&\quad + C_n+L(\ell_{r_{1,n}q})
    \\&=d(p,q) + \left|\sqrt{|x_0-a_{0,n}|^2+|y_0-b_{0,n}|^2} - \sqrt{|x_0-\frac{1}{2}|^2+|y_0-\frac{1}{2}|^2}\right| + C_n + 
    \\&\quad \left|\sqrt{|x_1-a_{1,n}|^2+|y_1-b_{1,n}|^2} - \sqrt{|x_1-\frac{1}{2}|^2+|y_1-\frac{1}{2}|^2}\right|
    \\&= d(p,q) + D_n
\end{align}
where $D_n \rightarrow 0$. 

Without loss of generality, let $q=(\frac{1}{2},\frac{1}{2})$, then only $r_{0,n}$ exists and still converges to $(\frac{1}{2},\frac{1}{2})$. We then can estimate an upperbound.
\begin{align}
    L_{g_n}(\ell_{pq}) &= L(\ell_{pr_{0,n}})+L_{f_n}(\ell_{r_{0,n}q})
    \\ &\leq L(\ell_{pr_{0,n}}) + C_n
    \\ &=d(p,q) + \left|\sqrt{|x_0-a_{0,n}|^2+|y_0-b_{0,n}|^2} - \sqrt{|x_0-\frac{1}{2}|^2+|y_0-\frac{1}{2}|^2}\right| + C_n
    \\&=d(p,q) + D_n
\end{align}
Now by Theorem \ref{thm-Squeeze} we find that $d(p,q)\leq d_{g_n}(p,q) \leq d(p,q) + D_n$ showing that $d_{f_n}\rightarrow d$ uniformly. 

Now we will show that the sequence does not satisfy an upper Lipschitz bound. Consider $p=\left( \frac{1}{2},\frac{1}{2}\right)$, $q_n=\left( \frac{n+1}{2n},\frac{1}{2}\right)$. To show that $d_{g_n}$ does not satisfy the Lipschitz upper bound, we show that 
   \begin{align}
       \forall c >0,\  \ \exists p,q \in [0,1]^2, \text{such that } \  cd(p,q) < d_{g_n}(p,q) 
   \end{align}

   To this end, consider $c_n= \frac{n^\alpha}{2}$ and we have   $d(p,q_n) = \left |\frac{n+1}{2n} - \frac{1}{2}\right | = \frac{1}{2n}$ and hence we find
    \begin{align} 
      c_n \ d(p,q_n)=  \frac{n^{\alpha}}{2} \left(\frac{1}{2n} \right) = \frac{1}{4n^{1-\alpha}} < \frac{1}{2 n^{1-\alpha}}=d_{g_n}(p,q_n),
    \end{align}

    which implies that $d_{g_n}$ does not satisfies the upper Lipschitz upper bound.     
\end{proof}

Now we will see that if we increase the blow up rate anymore than Example \ref{ex-Blowup Uniform no Lipschitz} then the sequence will not converge uniformly to Euclidean space. This was already observed by the first named author and C. Sormani in Example 3.5 and 3.6 of \cite{Allen-Sormani-2} where further details can by found.
\begin{thm}\label{ex-Blowup No Uniform}
    For the sequence of functions defined by \begin{align}
    \hat{g}_n(x,y)=
    \begin{cases}
       n^{\alpha}& \frac{n-1}{2n}\le x \le\frac{n+1}{2n}, \frac{n-1}{2n}< y \le \frac{n+1}{2n} 
        \\ 1 & \text{otherwise}
    \end{cases}
\end{align}
where $\alpha \in [1,\infty)$ we find that $d_{\hat{g}_n}$ does not converge uniformly to the Euclidean metric $d$.
\end{thm}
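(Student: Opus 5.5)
The plan is to exhibit one fixed pair of points whose $d_{\hat{g}_n}$-distance stays a definite amount above the Euclidean distance for all large $n$. That rules out uniform convergence by Definition \ref{defn-Uniform Convergence}. The natural choice is to take one point at the center of the blow up square, so that every curve must pay the penalty $n^{\alpha}$ just to escape the square. Set $p=(\frac12,\frac12)$ and $q=(1,\frac12)$, so $d(p,q)=\frac12$, and write $S_n=(\frac{n-1}{2n},\frac{n+1}{2n})^2$ for the open square. On $S_n$ we have $\hat g_n=n^{\alpha}$; the half-open definition only changes the value on the bottom edge, so it plays no role in what follows.

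Let $\gamma:[0,1]\to[0,1]^2$ be any piecewise smooth curve from $p$ to $q$, and let $t_0$ be the first time with $\gamma(t_0)\in\partial S_n$. Such a time exists since $q\notin \overline{S_n}$. Put $r=\gamma(t_0)$. On $[0,t_0)$ the curve lies in $S_n$, so
\begin{align}
L_{\hat g_n}(\gamma|_{[0,t_0]}) = n^{\alpha} L(\gamma|_{[0,t_0]}) \ge n^{\alpha} d(p,r) \ge n^{\alpha}\cdot\frac{1}{2n}=\frac{n^{\alpha-1}}{2}\ge \frac12,
\end{align}
using that every point of $\partial S_n$ is at Euclidean distance at least $\frac{1}{2n}$ from the center, and $\alpha\ge1$.

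For the remaining portion, $\hat g_n\ge1$ everywhere, so by the argument of Theorem \ref{thm-Distance Lower Bound Estimate} we get $L_{\hat g_n}(\gamma|_{[t_0,1]})\ge d(r,q)$. Since $d(p,r)\le \frac{\sqrt2}{2n}$, the triangle inequality gives $d(r,q)\ge \frac12-\frac{\sqrt2}{2n}$. Adding the two pieces and taking the infimum over $\gamma$ yields
\begin{align}
d_{\hat g_n}(p,q)-d(p,q)\ge \frac12-\frac{\sqrt2}{2n}\ge \frac14 \qquad (n\ge 3).
\end{align}
Hence $\sup_{p,q}|d_{\hat g_n}-d|\ge\frac14$ does not tend to zero.

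The only delicate step is the first-exit argument: we must justify that the initial segment lies in the region where the factor is exactly $n^{\alpha}$. We must also check that the half-open boundary does not open an escape route. It does not, because any escape still requires Euclidean length at least $\frac1{2n}$ inside the open square, which is all that is used.
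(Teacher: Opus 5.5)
Your proof is correct and rests on the same idea as the paper's: put $p$ at the center $(\frac12,\frac12)$ and observe that escaping the blow-up square costs at least $n^{\alpha}\cdot\frac{1}{2n}=\frac{n^{\alpha-1}}{2}\ge\frac12$. The paper instead takes the moving point $q_n=(\frac{n+1}{2n},\frac12)$ on the edge of the square and simply asserts $d_{\hat{g}_n}(p,q_n)=n^{\alpha}d(p,q_n)$; your first-exit argument supplies the lower-bound justification that the paper leaves implicit, and your fixed choice $q=(1,\frac12)$ even shows that pointwise convergence fails.
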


\begin{proof}
    Let the points $p = (\frac{1}{2},\frac{1}{2})$ and $q_n = (\frac{n+1}{2n},\frac{1}{2})$, so the euclidean distance is defined as $d(p,q_n) =\left| \frac{n+1}{2n} - \frac{1}{2} \right| = \frac{1}{2n}$  and we also have defined $d_{\hat{g}_n}(p, q_n) = n^{\alpha}  d(p,q_n) = \frac{n^{\alpha-1}}{2}$
    
Now if we calculate
    \begin{align}
        \sup_{p,q \in [0,1]^2}\left |d_{\hat{g}_n}(p,q) - d(p,q)\right| \ge \left| \frac{n^{\alpha-1}}{2} - \frac{1}{2n} \right|=\frac{n^{\alpha}-1}{2n},
    \end{align}
 and hence we find that $d_{\hat{g}_n} \not \rightarrow d$ uniformly.
\end{proof}

\subsubsection{Blow up on a Center Line}

Now we will see that if the sequence only blows up on a set of zero Lebesgue measure then this will not effect the distance function. This example is part of the test of what is required to show an upper Lipschitz bound since if the metric is completely unaffected by blowing up on a set of Lebesgue measure zero then an upper Lipschitz bound clearly applies.

\begin{thm}\label{thm-bar{g}_n uniform convergence} 
    For the sequence of functions defined by \begin{align}
    \bar{g}_{n}(x,y)=
    \begin{cases}
       n^{\alpha} & x=\frac{1}{2}, 0\le y \le 1 
        \\ 1 & \text{otherwise}
    \end{cases}
\end{align}
where  $\alpha \in (0,\infty)$ we find that $d_{\bar{g}_n}=d$ for all $n \in \N$.
\end{thm}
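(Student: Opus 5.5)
The plan is to prove the two inequalities $d\le d_{\bar g_n}$ and $d_{\bar g_n}\le d$ separately; together they give equality for every $n$.

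\textbf{Lower bound.} Since $\bar g_n(x,y)\ge 1$ for all $(x,y)\in[0,1]^2$, Theorem \ref{thm-Distance Lower Bound Estimate} (with $f=\bar g_n$ and $g=1$) gives $d(p,q)=d_1(p,q)\le d_{\bar g_n}(p,q)$ immediately.

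\textbf{Upper bound.} Here I would show that every pair $p,q$ can be joined by curves whose $L_{\bar g_n}$-length is at most $d(p,q)$ plus an arbitrarily small error. Let $p=(x_0,y_0)$ and $q=(x_1,y_1)$, and let $L$ denote the line $\{x=\frac12\}$. I would split into three cases.

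\textbf{Case 1: $x_0\ne x_1$.} The segment $\ell_{pq}$ is transverse to $L$, so it meets $L$ in at most one point. The integrand in $L_{\bar g_n}(\ell_{pq})$ is therefore bounded and differs from $|\ell_{pq}'|$ only on a set of parameter values of measure zero. Hence $L_{\bar g_n}(\ell_{pq})=d(p,q)$.

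\textbf{Case 2: $x_0=x_1\ne\frac12$.} The segment $\ell_{pq}$ misses $L$ entirely, so again $L_{\bar g_n}(\ell_{pq})=d(p,q)$.

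\textbf{Case 3: $x_0=x_1=\frac12$.} This is the main obstacle, because the straight segment lies inside the blow-up set and has length $n^{\alpha}d(p,q)$. I would instead pick a small $\varepsilon>0$ and use the piecewise linear curve
\[
\ell_{pp_\varepsilon}*\ell_{p_\varepsilon q_\varepsilon}*\ell_{q_\varepsilon q},\qquad p_\varepsilon=(\tfrac12+\varepsilon,y_0),\quad q_\varepsilon=(\tfrac12+\varepsilon,y_1)
\]
(use $\frac12-\varepsilon$ instead if needed to stay in $[0,1]^2$). The two horizontal pieces each touch $L$ only at one endpoint, so by the same measure-zero argument as in Case 1 each has $\bar g_n$-length $\varepsilon$. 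The vertical piece avoids $L$ and has length $|y_1-y_0|$. So
\[
d_{\bar g_n}(p,q)\le d(p,q)+2\varepsilon,
\]
and letting $\varepsilon\to0$ gives $d_{\bar g_n}(p,q)\le d(p,q)$.

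Combining the lower bound with the three cases yields $d_{\bar g_n}=d$ for all $n$ and all $\alpha\in(0,\infty)$.
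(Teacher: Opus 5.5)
Your proof is correct and follows essentially the same route as the paper: the lower bound from Theorem \ref{thm-Distance Lower Bound Estimate}, the measure-zero crossing argument for segments that cross or miss the line $x=\frac12$, and the $\varepsilon$-detour along $x=\frac12+\varepsilon$ when both points lie on that line. Your split into $x_0\ne x_1$ versus $x_0=x_1$ is slightly cleaner than the paper's case division, because it also explicitly covers configurations such as $x_0<\frac12=x_1$, which the paper's cases do not list.
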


\begin{proof}

Notice that by Theorem \ref{thm-Distance Lower Bound Estimate} since $\bar{g}_n\geq 1$ we find that $d_{\bar{g}_n}(p,q)\geq d(p,q)$.

\textbf{Case 1: $(p,q)\in \{(a,b)\in[0,1]^2\times [0,1]^2:\ell_{ab}\cap(\frac{1}{2}\times[0,1])=\emptyset\}$}
Then by calculating the length of any path $\gamma$ in this region, we know $d_{\bar{g}_n}(p,q)\leq L_{\bar{g}_n}(\gamma)=L(\gamma)=d(p,q)$.

\textbf{Case 2: $(p,q)\in \{(a,b)\in [0,1]^2\times [0,1]^2:\ell_{ab}\cap(\frac{1}{2}\times[0,1])\not=\emptyset\}$} 

Suppose $p= (x_0,y_0)$ $q = (x_1,y_1)$ where $x_0 \in[0,\frac{1}{2}]$ and $x_1\in (\frac{1}{2},1]$.  Then we know gamma intersects $\left\{\tfrac{1}{2}\right\}\times[0,1]$ at one point. Since this point is of measure 0, we know our length isn't affected by $n^\alpha$. So the length of $d_{\bar{g}_n}(p,q)\leq L_{\bar{g}_n}(\ell_{pq})=L(\ell_{pq})=d(p,q)$.

The argument is symmetric if $x_1\in [0,\frac{1}{2}]$ and $x_0 \in (\frac{1}{2},1]$.

Suppose $p= (\frac{1}{2},y_0)$ $q = (\frac{1}{2},y_1)$, let $a_0=(\epsilon+\frac{1}{2},y_0)$ and $a_1=(\epsilon + \frac{1}{2},y_1)$ for some $ \epsilon>0$.
Define
\begin{align}
    \phi=\ell_{pa_0}*\ell_{a_0a_1}*\ell_{a_1q}
\end{align}
we can calculate its length to be
\begin{align}
   d_{\bar{g}_n}(p,q)&\le L_{\bar{g}_n}(\phi)
   \\&=L(\phi)
    \\&=2\epsilon+|y_0-y_1|=2\epsilon+d(p,q).
\end{align}
Hence we have shown that $d(p,q)\leq d_{\bar{g}_n}(p,q)\leq d(p,q)+2\varepsilon$ and by letting $\varepsilon\rightarrow 0$ we find the desired result.
\end{proof}

\subsection{Shortcut Examples}\label{subsec-Shortcut Examples}

In this subsection we consider metrics which form shortcuts on different regions. Due to the fact that the distance function takes the infimum over the lengths of all curves, one expects even mild shortcuts to have a drastic effect on the limiting distance function.

\subsubsection{Shortcut on a Disappearing Center Square}
Here we see that if the shortcut is forming along a disappearing square then it will not be enough to effect lower bounds on the distance function but it will be enough to effect a lower Lipschitz bound.

\begin{thm}\label{ex-Shortcut on Dissappearing Square} 
    Let $\alpha \in (0,\infty)$ and consider the sequence of functions defined by \begin{align}
    f_n(x,y)=
    \begin{cases}
       \frac{1}{n^{\alpha}} & \frac{n-1}{2n}\le x \le\frac{n+1}{2n}, \frac{n-1}{2n} \le y \le \frac{n+1}{2n} 
        \\ 1 & \text{otherwise}
    \end{cases}
\end{align}
We find that $d_{f_n}$ converges uniformly to the Euclidean metric $d$ but does not satisfy a lower Lipschitz bound.
\end{thm}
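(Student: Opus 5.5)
The proof has two parts: uniform convergence via Theorem \ref{thm-Squeeze}, and failure of the lower Lipschitz bound via an explicit pair of points. Write $S_n=[\frac{n-1}{2n},\frac{n+1}{2n}]^2$ for the small square. Since $f_n\le 1$ everywhere, Theorem \ref{thm-Distance Lower Bound Estimate} immediately gives $d_{f_n}(p,q)\le d(p,q)$. So the only real work for uniform convergence is a lower bound of the form $d_{f_n}(p,q)\ge d(p,q)-C_n$ with $C_n\to 0$ independent of $p,q$.

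For the lower bound, I take an arbitrary piecewise smooth curve $\gamma$ from $p$ to $q$. If $\gamma$ never meets $S_n$, then $L_{f_n}(\gamma)=L(\gamma)\ge d(p,q)$. Otherwise, let $t_0$ be the first time and $t_1$ the last time that $\gamma$ lies in $S_n$. The pieces $\gamma|_{[0,t_0]}$ and $\gamma|_{[t_1,1]}$ lie, apart from their endpoints, outside $S_n$, so their $f_n$-length equals their Euclidean length. These lengths are at least $d(p,\gamma(t_0))$ and $d(\gamma(t_1),q)$ respectively. The middle piece has nonnegative length. Since $\gamma(t_0),\gamma(t_1)\in S_n$, we have $d(\gamma(t_0),\gamma(t_1))\le \diam(S_n)=\frac{\sqrt2}{n}$. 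The triangle inequality then gives
\begin{align}
L_{f_n}(\gamma)\ge d(p,\gamma(t_0))+d(\gamma(t_1),q)\ge d(p,q)-\frac{\sqrt{2}}{n}.
\end{align}
Taking the infimum over $\gamma$ yields $d(p,q)-\frac{\sqrt2}{n}\le d_{f_n}(p,q)\le d(p,q)$, and Theorem \ref{thm-Squeeze} with $C_n=\frac{\sqrt2}{n}$ gives uniform convergence. The cases where $p$ or $q$ lies in $S_n$ are covered by the same argument, since the bound only uses the first and last visit times. This is really the estimate of Theorem \ref{thm-Main Theorem Uniform Lower Bound} carried out by hand. I would not need Lemma \ref{lem-MinimizingCurvesArePiecewiseLinear}, though it could be cited to restrict to piecewise linear curves.

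For the failure of a lower Lipschitz bound, I take $p=(\frac12,\frac12)$ and $q_n=(\frac{n+1}{2n},\frac12)$, as in Theorem \ref{ex-Blowup Uniform no Lipschitz}. Then $d(p,q_n)=\frac1{2n}$. The segment $\ell_{pq_n}$ lies entirely in $S_n$, where $f_n=n^{-\alpha}$, so
\begin{align}
d_{f_n}(p,q_n)\le L_{f_n}(\ell_{pq_n})=\frac{1}{2n^{1+\alpha}}=n^{-\alpha}d(p,q_n).
\end{align}
Now suppose some $c>0$ satisfied $d_{f_n}\ge c\,d$ for all $n$. Choosing $n$ with $n^{-\alpha}<c$ gives a contradiction. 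So no uniform lower Lipschitz constant exists; indeed the best constant for $d_{f_n}$ tends to $0$.

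The main obstacle I expect is the lower bound in the first part, specifically making sure the entry/exit argument is valid for curves that wander in and out of $S_n$ many times. Using the first and last times of contact, rather than consecutive crossings, avoids this: all intermediate excursions are simply discarded, at the cost of the $\diam(S_n)$ term.
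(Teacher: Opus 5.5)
Your proof is correct and follows essentially the paper's route: $d_{f_n}\le d$ from $f_n\le 1$; a lower bound from cutting a curve where it enters and leaves $S_n$ and paying at most $\diam(S_n)$ via the triangle inequality; and the same pair of points up to reflection, $p=(\frac12,\frac12)$ and $q_n$ on the edge of $S_n$, to rule out a uniform lower Lipschitz constant. Your first/last-visit device handles arbitrary curves without Lemma \ref{lem-MinimizingCurvesArePiecewiseLinear}, and your constant $\frac{\sqrt2}{n}$ is the right one (the paper's $\frac{\sqrt2}{n^{\alpha}}$ fails at opposite corners of $S_n$ for large $n$ when $\alpha>1$, though uniform convergence still holds); only your aside is off, since Theorem \ref{thm-Main Theorem Uniform Lower Bound} does not apply here ($\mathcal{H}^1(S_n)=\infty$), and what your argument actually uses is $\diam(S_n)\to 0$.
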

\begin{proof}
First notice that $d(p,q)\geq d_{f_n}(p,q)$ by Theorem \ref{thm-Distance Lower Bound Estimate} since $1\geq f_n(x,y)$ for all $(x,y) \in [0,1]^2$.

    Now fix a p,q such that $p,q \not = (\frac{1}{2},\frac{1}{2})$, consider a piecewise linear path $\gamma_{pq}$ which is linear wherever $f_n$ is constant and so that $\gamma_{pq} \cap (\frac{n-1}{2n}, \frac{n+1}{2n})^2\not = \emptyset$. Then we can calculate its length to be $L_{f_n}(\gamma_{pq})=d(p,p'_1) + d_{f_n}(p'_1,p'_2) + d(p'_2,q)$ where $p'_1,p'_2 \in \partial (\frac{n-1}{2n},\frac{n+1}{2n})^2 $.

    
    Consider a second path $\ell_{pq}$ where  $L(\ell_{pq})$ is defined as $d(p,q)$ then we can do the following 
    \begin{align}
 L_{f_n}(\ell_{pq})&\le d(p,q)
 \\&\le d(p,p'_1) + d(p'_1,p'_2) + d(p'_2,q)
  \\&\le L_{f_n}(\gamma_{pq}) + |d(p'_1,p'_2)-d_{f_n}(p'_1,p'_2)|
  \\&= L_{f_n}(\gamma_{pq}) + \frac{\sqrt{2}(n^{\alpha-1}-1)}{n^\alpha}
   \\&\le L_{f_n}(\gamma_{pq}) + \frac{\sqrt{2}}{n^\alpha}. 
\end{align}
Furthermore, we can obtain
\begin{align}
 L(\ell_{pq}) \le L_{f_n}(\ell_{pq}) +\frac{\sqrt{2}}{n^\alpha}.
\end{align}
By combining both of these observations we find
\begin{align}
    d(p,q)&= L(\ell_{pq})\le L_{f_n}(\gamma_{pq}) +\frac{\sqrt{2}}{n^\alpha}
\end{align}
By taking the inf on the right side and using Lemma \ref{lem-MinimizingCurvesArePiecewiseLinear} we get 
\begin{align}
    d(p,q) \le d_{f_n}(p,q) + \frac{\sqrt{2}}{n^\alpha}.
\end{align}

Then we can rewrite this as 
\begin{align}
d(p,q) -\frac{\sqrt{2}}{n^\alpha} \le d_{f_n}(p,q) \le d(p,q) .
\end{align}
then by the Theorem \ref{thm-Squeeze} $d _{f_n}\rightarrow d$ uniformly as $\frac{\sqrt{2}}{n^\alpha} \rightarrow 0$

   To show that $d_{f_n}$ does not satisfy the Lipschitz lower bound, we show that 
   \begin{align}
       \forall c >0,\  \ \exists p,q \in [0,1]^2, \text{such that } \  cd(p,q) > d_{f_n}(p,q) 
   \end{align}

To this end, pick $c_n = \frac{2}{n^\alpha}$ and consider $p =(\frac{1}{2},\frac{1}{2})$ and $q_n=(\frac{n-1}{2n}, \frac{1}{2})$,

\begin{align}
    d(p,q_n) = \left | \frac{n}{2n} - \frac{n-1}{2n} \right|  = \frac {1}{2n}
   \\
   d_{f_n}(p,q_n) = \frac{1}{n^\alpha} \ d(p,q) = \frac{1}{n^\alpha} \left (\frac{1}{2n} \right ) = \frac{1}{2n^{\alpha+1}}
\end{align}

Combining previous results we have the following

\begin{align}
   c_n d(p,q) = \frac{2}{n^\alpha} \left (\frac{1}{2n} \right)= \frac{1}{n^{\alpha+1}} > \frac{1}{2n^{\alpha+1}}= d_{f_n}(p, q_n).
\end{align}

Hence $d_{f_n}$ does not satisfy a Lipschitz lower bound. 
\end{proof}

\subsubsection{Shortcut Along A Curve of Hausdorff $1$ Measure Zero}

Here we see that if the Hausdorff $1$ measure of the curve along which the shortcut is forming is zero then the shortcut will not effect the distance function. This is not too surprising but is included for comparison purposes.

\begin{thm}\label{ex-Shortcut H1 Measure Zero} 
    For the sequence of functions defined by \begin{align}
    f_n(x,y)=
    \begin{cases}
       \frac{1}{n} & x=\frac{1}{2}, 0 \le y \le 1, y \in \Q
        \\ 1 & \text{otherwise}
    \end{cases}
\end{align}
we find that $d_{f_n}=d$.
\end{thm}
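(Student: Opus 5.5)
We have to show $d_{f_n}=d$ for every $n$. The plan is to prove the two inequalities $d_{f_n}\le d$ and $d_{f_n}\ge d$ separately. The first is immediate from Theorem \ref{thm-Distance Lower Bound Estimate}, since $f_n\le 1$ everywhere.

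For the reverse inequality, fix a piecewise smooth curve $\gamma=(x(t),y(t))$, $t\in[0,1]$, from $p$ to $q$. Let $B=\{\tfrac12\}\times(\Q\cap[0,1])$ be the shortcut set, and let $T=\gamma^{-1}(B)\subset[0,1]$. Off $T$ the integrand of $L_{f_n}(\gamma)$ agrees with that of $L_1(\gamma)$. It therefore suffices to show that
\begin{align}
\int_T \sqrt{x'(t)^2+y'(t)^2}\,dt=0,
\end{align}
because then
\begin{align}
L_{f_n}(\gamma)=L_1(\gamma)\ge d(p,q),
\end{align}
and taking the infimum over $\gamma$ gives $d_{f_n}(p,q)\ge d(p,q)$.

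To prove the integral vanishes, write $T=\bigcup_{r\in\Q\cap[0,1]}T_r$, where
\begin{align}
T_r=\{t: x(t)=\tfrac12,\ y(t)=r\}.
\end{align}
Each $T_r$ is closed, and the union is countable. By countable additivity it is enough to show $\int_{T_r}|\gamma'|\,dt=0$ for each $r$. On each smooth piece of $\gamma$, split $T_r$ into two parts. The first part is the set of isolated points of $T_r$, which is countable and hence null. The second part is the set of accumulation points of $T_r$. At such a point $t_0$, $\gamma$ is constant on a sequence $t_k\to t_0$ with $t_k\ne t_0$, so difference quotients give $\gamma'(t_0)=0$, and the integrand vanishes there. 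Hence $\int_{T_r}|\gamma'|\,dt=0$.

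An alternative route to the same conclusion is the area formula: $\int_T|\gamma'|\,dt$ is bounded by the integral of the multiplicity function over $B$ against $\mathcal{H}^1$, which is zero because $\mathcal{H}^1(B)=0$ for a countable set. The elementary argument above avoids needing that formula.

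The main obstacle is this measure-zero step. The reason it needs care is that $\gamma$ may stay on the whole segment $\{\tfrac12\}\times[0,1]$ for a positive amount of time, and during that time it can hit rational heights uncountably often in parameter. The decomposition by the value $r$ is what handles this: for each fixed $r$, the parameter set $T_r$ is a level set of $\gamma$, so $\gamma'=0$ on it except at countably many points. Equivalently, a curve on the segment spends zero $|\gamma'|\,dt$-measure at each individual point.
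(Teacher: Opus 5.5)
Your proof is correct and follows the paper's approach. The paper also shows $L_{f_n}(\gamma)=L(\gamma)$ for every curve, because $\gamma$ meets the shortcut set $\{\frac{1}{2}\}\times\Q$ only in a countable set, and then takes the infimum; so your separate appeal to Theorem \ref{thm-Distance Lower Bound Estimate} for $d_{f_n}\le d$ is harmless but unnecessary. Your level-set argument, showing $\gamma'=0$ at accumulation points of each $T_r$, supplies the justification the paper leaves implicit when it passes from countability of the image $R$ to $L_{f_n}(R\cap\gamma)=0$, including the case where $\gamma$ pauses at a rational point.
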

\begin{proof}
   Consider any path $\gamma$ connecting $p$ to $q$. Denote $R = \gamma([0,1]) \cap (\{\frac{1}{2}\}\times\Q)$. Notice that $|R|\leq |\Q |$ showing $R$ is countable. So the length of $\gamma$ is unaffected by $f_n$ defined on $\{\frac{1}{2}\}\times \Q$. Then we can calculate the length of the path.
   \begin{align}
       L_{f_n}(\gamma)&=L_{f_n}(R\cap \gamma)+ L_{f_n}(R^C\cap \gamma)
       \\&=0+L(R^C\cap \gamma)=L(\gamma).
   \end{align}
   Taking the infimum on both sides, we arrive at the desired equality
   \begin{align}
       d_{f_n}(p,q)&=d(p,q).
   \end{align}
\end{proof}

\subsubsection{Shortcut Along A Rectangle Containing The Center Line}

Here we give another instance of what was observed by the first named author and C. Sormani \cite{Allen-Sormani,Allen-Sormani-2}, that a shortcut along a curve has a drastic effect on the limiting distance function.

\begin{thm}\label{ex-Shortcut on a Rectangle} 
    For the sequence of functions defined by \begin{align}
    \hat{f}_n(x,y)=
    \begin{cases}
       \frac{1}{n} & \frac{n-1}{2n}\le x \le\frac{n+1}{2n}, 0 \le y \le 1
        \\ 1 & \text{otherwise}
    \end{cases}
\end{align}
we find that $d_{\hat{f}_n}$ converges uniformly to $d_{\hat{f}_\infty}$.
\end{thm}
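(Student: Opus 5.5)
I will prove the uniform estimate
\begin{align}
d_{\hat f_\infty}(p,q) - \tfrac{1}{n} \le d_{\hat f_n}(p,q) \le d_{\hat f_\infty}(p,q) + \tfrac{1}{n},
\end{align}
for all $p,q \in [0,1]^2$, and then conclude with Theorem \ref{thm-Squeeze}. Throughout, write $S_n = [\frac{n-1}{2n},\frac{n+1}{2n}]\times[0,1]$ for the strip, $p=(x_0,y_0)$, $q=(x_1,y_1)$, and $P=\{\frac12\}\times[0,1]$. By the earlier example, $d_{\hat f_\infty}(p,q)=\min\{|x_0-\frac12|+|x_1-\frac12|,\ d(p,q)\}$, so it is enough to compare $d_{\hat f_n}$ with each of the two quantities in this minimum.

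\textbf{Upper bound.} First, $\hat f_n \le 1$, so Theorem \ref{thm-Distance Lower Bound Estimate} gives $d_{\hat f_n}(p,q)\le d(p,q)$. Second, take the curve $\ell_{pp'}*\ell_{p'q'}*\ell_{q'q}$ with $p'=(\frac12,y_0)$ and $q'=(\frac12,y_1)$. Its horizontal pieces have length at most $|x_0-\frac12|+|x_1-\frac12|$, since the weight is at most $1$. Its vertical piece lies in $S_n$, so it costs $\frac1n|y_0-y_1|\le \frac1n$. Together these give $d_{\hat f_n}(p,q)\le d_{\hat f_\infty}(p,q)+\frac1n$.

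\textbf{Lower bound.} This is the main step. By Lemma \ref{lem-MinimizingCurvesArePiecewiseLinear}, whose hypotheses hold if the strip is taken closed so that its boundary lines belong to the cheaper region, I may restrict to piecewise linear curves $\gamma$ that are linear on each of the three rectangles.

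Suppose first that $\gamma$ misses $S_n$. Then $L_{\hat f_n}(\gamma)=L(\gamma)\ge d(p,q)$.

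Suppose instead that $\gamma$ meets $S_n$. Let $a$ be the first point of $\gamma$ in $S_n$ and $b$ the last. The piece of $\gamma$ from $p$ to $a$ has weighted length at least its Euclidean length outside $S_n$, which is at least the horizontal distance from $p$ to $S_n$. Crossing the strip costs at most $\frac1n$ horizontally. Hence
\begin{align}
L_{\hat f_n}(\gamma|_{[0,a]}) \ge |x_0-\tfrac12| - \tfrac{1}{2n},
\end{align}
and the analogous bound holds for the piece from $b$ to $q$. Therefore $L_{\hat f_n}(\gamma)\ge |x_0-\frac12|+|x_1-\frac12|-\frac1n$.

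Care is needed when $p$ or $q$ lies inside $S_n$. In that case $|x_0-\frac12|\le\frac1{2n}$, so the corresponding bound above is still valid, since its right-hand side is nonpositive.

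Taking the minimum of the two cases and then the infimum over $\gamma$ gives $d_{\hat f_n}\ge d_{\hat f_\infty}-\frac1n$.

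\textbf{Obstacle and conclusion.} The main obstacle is justifying that a curve can only save horizontal length inside the strip. This holds because outside $S_n$ the weight is $1$, and Euclidean length dominates horizontal displacement. Once the two-sided estimate is established, Theorem \ref{thm-Squeeze} with $C_n=\frac1n$ yields uniform convergence of $d_{\hat f_n}$ to $d_{\hat f_\infty}$.
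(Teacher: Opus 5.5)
Your proof is correct. It rests on the same two ingredients as the paper's. For the upper bound you use explicit competitor curves: the segment $\ell_{pq}$ and the path through $(\tfrac12,y_0)$, $(\tfrac12,y_1)$. For the lower bound you use a first-entry/last-exit decomposition with horizontal displacement.

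The difference is in the decomposition. The paper splits by where the endpoints lie (opposite halves versus the same half). In the same-half case it then splits curves according to whether they meet the midline $\{\tfrac12\}\times[0,1]$. It asserts that among midline-avoiding curves $\ell_{pq}$ is shortest once $n$ is large, and it uses thresholds $N$ that depend on $p,q$. You split only according to whether $\gamma$ meets the closed strip $S_n$, and bound the outer pieces by horizontal distance. This gains you three things:
\begin{itemize}
\item Curves that enter the strip without reaching the midline are covered automatically. For $p=(\tfrac14,0)$, $q=(\tfrac14,1)$, running vertically along $x=\tfrac{n-1}{2n}$ costs about $\tfrac12$, versus $d(p,q)=1$. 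The paper's midline dichotomy does not control such curves.
\item Endpoints inside the strip need no separate treatment, since your bound just becomes trivial there.
\item The error $\tfrac1n$ is independent of $p,q$ and valid for every $n$, which is exactly what Theorem~\ref{thm-Squeeze} needs.
\end{itemize}
The paper's case analysis makes the geometry of each configuration more visible, for instance that a curve between opposite halves must cross the strip. Yours is shorter and more robust.

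Two small remarks. First, the appeal to Lemma~\ref{lem-MinimizingCurvesArePiecewiseLinear} is unnecessary, because your lower-bound argument works verbatim for any piecewise smooth curve. Second, the sentence ``crossing the strip costs at most $\tfrac1n$ horizontally'' is muddled. What you actually use is that the $x$-coordinate of the first entry point $a$ lies within $\tfrac{1}{2n}$ of $\tfrac12$. The triangle inequality then gives $L(\gamma|_{[p,a]})\ge |x_0-\tfrac12|-\tfrac{1}{2n}$ for every position of $p$.
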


\begin{proof}
Let $p=(x_0,y_0)$, $q=(x_1,y_1)$

\textbf{Case 1: $p\in [0,\frac{1}{2}]\times [0,1]$ and $q\in [\frac{1}{2},1]\times[0,1]$.} 
We know $d_{\hat{f}_\infty}(p,q)=|x_0-\frac{1}{2}|+|x_1-\frac{1}{2}|$. Let $a_0=(\frac{1}{2},y_0)$, and $a_1=(\frac{1}{2},y_1)$. Consider the path 
\begin{align}
    \beta = \ell_{p,a_0}*\ell_{a_0,a_1}*\ell_{a_1,q}.
\end{align}
We can calculate its length as follows
\begin{align}
    L_{f_n}(\beta)&=L_{f_n}(\ell_{p,a_0}) + L_{f_n}(\ell_{a_0,a_1}) + L_{f_n}(\ell_{a_1,q})
    \\&=(|x_0-\frac{n-1}{2n}|+\frac{1}{n}|\frac{n-1}{2n}-\frac{1}{2}|) + \frac{|y_0-y_1|}{n}+(|x_1-\frac{n+1}{2n}|+\frac{1}{n}|\frac{n+1}{2n}-\frac{1}{2}|)
    \\&\leq |x_0-\frac{1}{2}| + |x_1-\frac{1}{2}| + \frac{|y_0-y_1|}{n}
    \\&\le d_{\hat{f}_\infty}(p,q)+\frac{1}{n}.
\end{align}
So we have an upper bound. For the lower bound, any curve connecting $p$ and $q$ must be less than or equal to $|x_0-\frac{n-1}{2n}|$ and $|x_1-\frac{n+1}{2n}|$ since this is the shortest length within these regions.
\begin{align}
    d_{\hat{f}_n}(p,q) &\ge |\frac{n-1}{2n}-x_0| + |\frac{n+1}{2n}-x_1|
    \\&=|\frac{n-1}{2n}-x_0| + |\frac{n+1}{2n}-x_1| + |\frac{1}{2}-x_0| + |\frac{1}{2}-x_1|- |\frac{1}{2}-x_0| - |\frac{1}{2}-x_1|
    \\&=d_{\hat{f}_\infty}(p,q)-(|\frac{1}{2}-x_1|-|x_1-\frac{n+1}{2n}|)- |\frac{1}{2}-x_0|-|x_0-\frac{n-1}{2n}|
    \\&=d_{\hat{f}_\infty}(p,q)-C_n'.
\end{align}
\textbf{Case 2: $p,q\in [0,\frac{1}{2}]\times [0,1]$}

Let $b_1 = (\frac{n-1}{2n},y_1)$ $b_0= (\frac{n-1}{2n},y_0)$. Consider the paths $\ell_{p,q}$ and
\begin{align}
    \phi = \ell_{p,b_0}*\ell_{b_0,b_1}*\ell_{b_1,q}.
\end{align}
If neither $p$ or $q$ are $\{\frac{1}{2}\}\times[0,1]$, then we know there exists some $N$ such that $p,q\notin [\frac{n-1}{2n},\frac{1}{2}]\times[0,1]$ and hence we can calculate $L_{\hat{f}_n}(\ell_{pq})=d(p,q)$ and $L_{\hat{f}_n}(\phi) = \frac{|y_0-y_1|}{n}+|x_0-\frac{1}{2}|+|x_1-\frac{1}{2}|\le|x_0-\frac{1}{2}|+|x_1-\frac{1}{2}|+\frac{1}{n}$. This implies that 
\begin{align}
 d_{\hat{f}_n}(p,q) &\le \min\{L_{\hat{f}_n}(\ell_{pq}) ,L_{\hat{f}_n}(\phi)\}  
 \\=& \min\{d(p,q) ,|x_0-\frac{1}{2}|+|x_1-\frac{1}{2}|+\frac{1}{n}\}  
  \\\le& \min\{d(p,q)+\frac{1}{n} ,|x_0-\frac{1}{2}|+|x_1-\frac{1}{2}|+\frac{1}{n}\}  
 \\\le& \min\{d(p,q) ,|x_0-\frac{1}{2}|+|x_1-\frac{1}{2}|\} +\frac{1}{n}=d_{\hat{f}_{\infty}}(p,q)+\frac{1}{n}.
\end{align}

For the lower bound, if we suppose $\gamma\cap(\{\frac{1}{2}\}\times [0,1])=\emptyset$ then for $N$ chosen large enough the shortest path in this class is $\ell_{p,q}$. So now we consider when $\gamma\cap(\{\frac{1}{2}\}\times [0,1])\not=\emptyset$ and define $r_{0,n}=(\frac{n-1}{2n},h_0)$ to be the first time $\gamma$ intersects the region where $\hat{f}_n=\frac{1}{n}$, and $r_{1,n}=(\frac{n-1}{2n},h_1)$ be the last time $\gamma$ intersects this region. Also denote $t_0$, $t_1 \in [0,1]$ to be the times where $\gamma(t_0)= r_{0,n}$ and $\gamma(t_1)= r_{1,n}$. Then we can calculate the length of $\gamma$ to be 
\begin{align}
    L_{\hat{f}_n}(\gamma)&=L(\gamma|_{[0,t_0]})+L_{\hat{f}_n}(\gamma|_{[t_0,t_1]})+ L(\gamma|_{[t_1,1]})
    \\&\geq L(\ell_{p,r_{0,n}})+L_{\hat{f}_n}(\ell_{r_{0,n},r_{1,n}})+L(\ell_{r_{1,n},q})
    \\&\geq |x_0-\frac{n-1}{2n}|+\frac{1}{n}|h_0-h_1|+|\frac{n-1}{2n}-x_1|
    \\&=(|x_0-\frac{n-1}{2n}|-|x_0-\frac{1}{2}|)+\frac{1}{n}|h_0-h_1|+(|\frac{n-1}{2n}-x_1|-|\frac{1}{2}-x_1|)
    \\&\quad + |x_1-\frac{1}{2}|+|\frac{1}{2}-x_0|
    \\&\ge |x_1-\frac{1}{2}|+|\frac{1}{2}-x_0| +(|\frac{n-1}{2n}-x_1|-|\frac{1}{2}-x_1|)+(|x_0-\frac{n-1}{2n}|-|x_0-\frac{1}{2}|)
    \\&=|x_1-\frac{1}{2}|+|\frac{1}{2}-x_0|-C_n'.
\end{align}
Since any curve connecting $p$ to $q$ falls into one of these two cases we take the minimum between these two lower bounds to find 
\begin{align}
    d_{\hat{f}_n}(p,q)&\geq\min\{d(p,q),|x_0-\frac{1}{2}|+|x_1-\frac{1}{2}|-C_n'\}
    \\&\ge\min\{d(p,q)-C_n',|x_0-\frac{1}{2}|+|x_1-\frac{1}{2}|-C_n'\}
    \\&= \min\{d(p,q),|x_0-\frac{1}{2}|+|x_1-\frac{1}{2}|\}-C_n'
    \\&=d_{\hat{f}_{\infty}}(p,q)-C_n'
\end{align}
 and we have a lower bound. 
 
 Putting both cases together, let $C_n=\max\{C_n',\frac{1}{n}\}$ and hence we can bound the function
 \begin{align}
     d_{\hat{f}_\infty}(p,q)-C_n\leq d_{\hat{f}_n}(p,q) \leq d_{\hat{f}_\infty}(p,q)+C_n,
 \end{align}
 so that the desired result follows from Theorem \ref{thm-Squeeze}.
\end{proof}

In the next example we consider a similar shortcut forming along a curve but we delete portions of the shortcut along the curve to consider at what rate the shortcut must appear in order to effect the limiting distance function.

\begin{thm}\label{ex-Shortcut Dense Uniform no Lipschitz} 
For the sequence of functions defined by
 \begin{align}
 S=\left\{s_{i,j}= \tfrac{i}{2^j}\,: \,  i=1,2,... (2^j-1),\, j\in \mathbb{N}\right\}= \left\{ \tfrac{1}{2},\tfrac{1}{4}, \tfrac{2}{4},  \tfrac{3}{4}, 
\tfrac{1}{8},\tfrac{2}{8},\tfrac{3}{8} ,...\right\}
 \end{align}
 which is dense in $[0,1]$
 and
 \begin{align}
 \delta_j:= \left (\frac{1}{2} \right)^{2j},\quad j \in \mathbb{N}.
 \end{align}
 Let
  \begin{align}
 \tilde{f}_j(x,y)=
 \begin{cases}
  \frac{1}{j} & (x,y)\in [\frac{1}{2}-\delta_j,\frac{1}{2}+\delta_j]\times[s_{i,j}-\delta_j, s_{i,j} +\delta_j] \textrm{ for } i =1,...,2^j-1
 \\ 1 & \textrm{ elsewhere }
 \end{cases}
\end{align}
    we find that $d_{\tilde{f}_n}$ converges uniformly to the Euclidean metric $d$ but does not satisfy a lower Lipschitz bound.
  
\end{thm}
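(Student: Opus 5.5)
Since $\tilde{f}_j \le 1$ everywhere, Theorem \ref{thm-Distance Lower Bound Estimate} gives $d_{\tilde{f}_j}(p,q) \le d(p,q)$. For uniform convergence it therefore suffices to prove a lower bound $d(p,q) - C_j \le d_{\tilde f_j}(p,q)$ with $C_j \to 0$, and then apply Theorem \ref{thm-Squeeze}.

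The shortcut set is a union of $2^j-1$ squares $Q_{i,j}$ of side $2\delta_j = 2\cdot 4^{-j}$, all centered on the line $x=\frac12$. Each square has diameter $2\sqrt2\,\delta_j$. For $j\ge 2$ the squares are disjoint, since the spacing $2^{-j}$ exceeds $2\delta_j$.

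For the lower bound, take any piecewise smooth $\gamma$ from $p$ to $q$ and compare its $\tilde f_j$-length with the length of a modified curve. Each maximal subarc of $\gamma$ inside a square $Q_{i,j}$, say from entry point $a$ to exit point $b$, has Euclidean length at least $d(a,b)$. Replacing it by the segment $\ell_{ab}$ changes the Euclidean length by at most what it saves. Organize this square by square. In $Q_{i,j}$ the $\tilde f_j$-length of $\gamma$ is at least $\frac1j$ times the Euclidean length of $\gamma \cap Q_{i,j}$. Replace the whole portion of $\gamma$ between its first entry into $Q_{i,j}$ and its last exit by the straight segment joining those two points. This segment has Euclidean length at most $2\sqrt2\,\delta_j$ and lies in the closed convex square. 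Doing this for every square the curve meets gives a curve $\tilde\gamma$ with
\begin{align}
d(p,q) \le L(\tilde \gamma) \le L_{\tilde f_j}(\gamma) + (2^j-1)\, 2\sqrt2\,\delta_j \le L_{\tilde f_j}(\gamma) + 2\sqrt 2\, 2^{-j}.
\end{align}
Outside the squares the two lengths agree, and inside each square we dropped a nonnegative term and added at most one diameter. Taking the infimum over $\gamma$ gives $d(p,q) - 2\sqrt2\,2^{-j} \le d_{\tilde f_j}(p,q)$, so Theorem \ref{thm-Squeeze} yields $d_{\tilde f_j} \to d$ uniformly. The step needing the most care is this surgery. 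The curve may enter and leave a square many times, so one must process squares in parameter order and check that the surgery on one square does not re-create visits to squares already handled. Since the squares are disjoint and the replacement segments stay inside their own square, a single pass using the first entry and last exit of each visited square should suffice. Alternatively, one can argue exactly as in the proof of Theorem \ref{ex-Shortcut on Dissappearing Square}, summing the per-square defect $|d(a,b) - d_{\tilde f_j}(a,b)| \le 2\sqrt2\,\delta_j$.

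For the failure of a lower Lipschitz bound, imitate the end of the proof of Theorem \ref{ex-Shortcut on Dissappearing Square}. Take $p=(\frac12,\frac12)$ and $q_j=(\frac12+\delta_j,\frac12)$. Both points lie in the square around $s_{1,1}=\frac12$, which belongs to the family for every $j$ (as $s_{2^{j-1},j}=\frac12$). The horizontal segment between them lies in that square, so $d_{\tilde f_j}(p,q_j) \le \frac{1}{j}\delta_j = \frac1j d(p,q_j)$. Given any $c>0$, choose $j > 2/c$. Then $c\, d(p,q_j) > d_{\tilde f_j}(p,q_j)$, so no uniform constant $c>0$ satisfies $d_{\tilde f_j} \ge c\, d$ for all $j$.
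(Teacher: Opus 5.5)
Your proof is correct, but your lower bound takes a different route from the paper's. The paper argues case by case according to where $p,q$ sit relative to $x=\frac12$. It cuts a competing curve at its first and last crossing of the midline and restricts the middle portion to the strip $[\frac12-\delta_j,\frac12+\delta_j]\times[0,1]$. It then assumes $y$-monotonicity, reduces to piecewise linear curves via Lemma \ref{lem-MinimizingCurvesArePiecewiseLinear}, and bounds from below the Euclidean length spent in the gaps between consecutive squares.

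You use only that the shortcut set is $2^j-1$ disjoint closed squares whose diameters sum to at most $2\sqrt2\,2^{-j}$, so each visited square costs at most one diameter. This gives the explicit, case-free estimate $d-2\sqrt2\,2^{-j}\le d_{\tilde f_j}\le d$. It also avoids the strip, monotonicity and piecewise-linear reductions altogether. In effect it is a sum-of-diameters analogue of Theorem \ref{thm-Main Theorem Uniform Lower Bound}. That theorem could not be quoted as stated here, because the squares have infinite $\mathcal{H}^1$ measure.

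What the paper's gap-counting buys in exchange is that it carries over to Theorems \ref{ex-Shortcut No Uniform No Lipschitz Middle} and \ref{ex-Shortcut No Uniform No Lipschitz Extreme}. There the diameters sum to quantities tending to $\frac{\sqrt2}{2}$ and $\sqrt2$, your estimate gives nothing, and the limit is not Euclidean.

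You hedge that a single pass "should suffice"; you can make the surgery airtight by defining it greedily. Let $s_1$ be the first time $\gamma$ meets a square and $e_1$ the last time $\gamma$ lies in that square. Let $s_{k+1}$ be the first time after $e_k$ that $\gamma$ meets a square, and so on, with $e_0=0$ and $s_{m+1}=1$. Each square is used at most once, so $m\le 2^j-1$. On each $(e_k,s_{k+1})$ the curve avoids all squares, so $\tilde f_j\circ\gamma=1$ there. The triangle inequality then gives $d(p,q)\le\sum_{k=0}^{m}L(\gamma|_{[e_k,s_{k+1}]})+\sum_{k=1}^{m}d(\gamma(s_k),\gamma(e_k))\le L_{\tilde f_j}(\gamma)+m\,2\sqrt2\,\delta_j$. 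With this form you never need to construct the replacement curve or check that its segments stay inside their squares.

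Your Lipschitz counterexample is the paper's up to the choice of test points.
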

\begin{proof}
Notice that $\tilde{f}_j \le 1$ and hence by Theorem \ref{thm-Distance Lower Bound Estimate} we know that $d_{\tilde{f}_j} \le d$. 

We will now establish the lower bound by cases. Let $p,q \in [0,1]^2$ so that $p = (x_0,y_0)$ and $q=(x_1,y_1)$.

\textbf{Case 1: $p$, $q\in [0,\frac{1}{2})\times[0,1]$, or $p\in [0,\frac{1}{2})\times[0,1]$ $q\in \{\frac{1}{2}\}\times[0,1]$}    
    
     Take some path $\gamma$ such that $(\gamma([0,1])\setminus (\{p\}\cup \{q\}))\cap (\{\frac{1}{2}\}\times[0,1])=\emptyset$. Then amongst curves of this type, we know the minimizing curve is $\ell_{pq}$ whose length is $d(p,q)$. 
    
    Consider some $\gamma$ such that $\gamma([0,1])\cap(\{\frac{1}{2}\}\times[0,1])\not= \emptyset$. Denote $r_0=\gamma(t_0)=(x_{r_0},y_{r_0})$ to be the first time it intersects with $(\{\frac{1}{2}\}\times[0,1])$, and $r_1=\gamma(t_1) = (x_{r_1},y_{r_1})$ be the last time it intersects this set. Also let $a_{0,j}=\gamma(s_{0,j})$ be the first time gamma intersects $\{\frac{1}{2}-\delta_j\}\times[0,1]$ for $t\in[0,t_0]$ and $a_{1,j}=\gamma(s_{1,j})$ be the last time $\gamma$ intersects $\{\frac{1}{2}-\delta_j\}\times[0,1]$ for $t\in [t_1,1]$. Calculating the length of $\gamma$ we find 
    \begin{align}
        L_{\tilde{f}_j}(\gamma)&= L(\gamma|_{[0,s_{0,j}]})+ L_{\tilde{f}_j}(\gamma|_{[s_{0,j},t_0]})+L_{\tilde{f}_j}(\gamma|_{[t_0,t_1]})+L_{\tilde{f}_j}(\gamma|_{[t_1,s_{1,j}]}) + L(\gamma|_{[s_{1,j},1]})
        \\&\geq L(\ell_{pa_{0,j}})+L_{\tilde{f}_j}(\gamma|_{[t_0,t_1]})+L(\ell_{a_{1,j}q})
        \\&= L(\ell_{pr_0}))+(L(\ell_{pa_{0,j}})- L(\ell_{pr_0}))+L_{\tilde{f}_j}(\gamma|_{[t_0,t_1]})
        \\&\quad+(L(\ell_{a_{1,j}q})-L(\ell_{r_1q}))+L(\ell_{r_1q})
        \\&= L(\ell_{pr_0})-B_j+L_{\tilde{f}_j}(\gamma|_{[t_0,t_1]})-B_j'+L(\ell_{r_1q})
        \\& \geq L(\ell_{pr_0})-B_j+L_{\tilde{f}_j}(\gamma|_{[t_0,t_1]})-B_j'+L(\ell_{r_1q}).
    \end{align}
With $L(\ell_{pa_{0,j}})-L(\ell_{pr_0}) = B_j\rightarrow 0$, and $ L(\ell_{a_{1,j}1})-L(\ell_{r_1}q)=B_j'\rightarrow 0$. 

Now we want to get a better estimate for $L_{\tilde{f}_j}(\gamma|_{[t_0,t_1]})$ . To this end, we claim that we only have to consider paths $\psi_j$ from $r_0$ to $r_1$ such that $\psi_j\subset [\frac{1}{2}-\delta_j,\frac{1}{2}+\delta_j]\times[0,1]$ $\forall j \in \N$. 

To prove this claim, let $\psi_j$ be a path connecting $r_0$ and $r_1$. Suppose $\psi_j([0,1]) \cap ([\frac{1}{2}-\delta_j,\frac{1}{2}+\delta_j]\times[0,1])^c\not= \emptyset$. Define $E_j=\{e_{0,j},...,e_{m,j}\}$ to be the set of points in which $\psi_j$ exits and enters, consecutively, the set $[\frac{1}{2}-\delta_j,\frac{1}{2}+\delta_j]\times[0,1]$ that lie on its boundary. If we define $c_{i,j}$ to be $\psi (c_{i,j}) = e_{i,j}$ then we can define a path 
\begin{align}
\phi_j(t)=\
    \begin{cases}
        \psi_j(t),& \text{if } t\in [0, c_{i,j}) \cup (c_{i+1,j}, c_{i+2,j}), \text{for } i\in \{0,1,...,m-2\}
        \\\ell_{e_{i,j}e_{i+1,j}},& \text{if } t\in [c_{i,j},c_{i+1,j}], \text{ for } i \in \{0,1,...,m-1\}
    \end{cases}
\end{align}

which is contained in the $[\frac{1}{2}-\delta_j,\frac{1}{2}+\delta_j]\times[0,1]$. Then we can calculate 
\begin{align}
    L_{\tilde{f}_j}(\psi_j)&=L_{\tilde{f}_j}(\psi_j\cap([\frac{1}{2}-\delta_j,\frac{1}{2}+\delta_j]\times[0,1]))+L(\psi_j\cap([\frac{1}{2}-\delta_j,\frac{1}{2}+\delta_j]\times[0,1])^c)
    \\&\geq L_{\tilde{f}_j}(\psi_j\cap([\frac{1}{2}-\delta_j,\frac{1}{2}+\delta_j]\times [0,1]) +\sum_{k=1}^m d(e_{k-1,j},e_{k,j}) 
    \\&\geq L_{\tilde{f}_j}(\psi_j\cap([\frac{1}{2}-\delta_j,\frac{1}{2}+\delta_j]\times [0,1]) +\sum_{k=1}^mL_{\tilde{f}_j}(\ell_{e_{k-1,j}e_{k,j}})
    \\&=L_{\tilde{f}_j}(\phi_j).
\end{align}

Hence, we have established the claim and we only need to consider curves from $r_0$ to $r_1$ such that $\psi_j\subset [\frac{1}{2}-\delta_j,\frac{1}{2}+\delta_j]\times[0,1]$ $\forall j \in \N$. Furthermore, we may assume that $\psi_j$ is monotone non-decreasing in $y$ since we could always find a monotone curve  which is shorter in length than a curve which is non-monotone in $y$. 

 With this in mind, define another set $U_j=\{u_{k_1,j},u_{k_2,j},...,u_{k_b,j}\}$ to be the consecutive intersections of $\psi_j$ with some $([\frac{1}{2}-\delta_j,\frac{1}{2}+\delta_j]\times \{s_{k,j}-\delta_j,s_{k,j}+\delta_j\}) $. Also, define $v_{k_i,j}$ so that $\gamma(v_{k_i,j})=a_{k_i,j}$ for $i \in \{1,...,b\}$ and $\gamma(t_0)=u_{k_0,j}=\gamma(v_{k_{0,j}})$, $\gamma(t_1)=u_{k_{b+1},j}=\gamma(v_{k_{b+1,j}})$. We then define a path 
\begin{align}
    \tau_j=\ell_{r_0u_{k_1,j}}*...*\ell_{r_0u_{k_b,j}}
\end{align}
and by applying Lemma $\ref{lem-MinimizingCurvesArePiecewiseLinear}$ we know that $\tau_j$ is exactly the type of curve which needs to be considered when estimating the distance. So we can make the observation that $L_{\tilde{f}_j}(\psi_j)\geq L_{\tilde{f}_j}(\tau_j)$. Let $N_j=\frac{d(r_0,r_1)}{\frac{1}{2^j}}=2^jd(r_0,r_1)$ be an estimate of the amount of rectangles in between $r_0$ and $r_1$. Define $A_{i,j} = [\frac{1}{2}-\delta, \frac{1}{2} +\delta]\times[s_{i,j} -\delta_j]$ with $\displaystyle A_j = \bigcup_{0\leq i\leq 2^j-1} A_{i,j}$ Then we can use this to estimate the length of $\tau_j$. 

\begin{align}
    L_{\tilde{f}_j}(\tau_j) &= L_{\tilde{f}_j}(\tau_j \cap A_j) +  L_{\tilde{f}_j}(\tau_j\cap (A_j)^c)
    \\ &\geq L(\tau_j\cap (A_j)^c)
    \\&\geq \frac{1}{2^j}N_j=d(r_0,r_1)
\end{align}

We can then use this to bound $L_{\tilde{f}_j}(\gamma|_{[t_0,t_1]})$ from below
    \begin{align}
        L_{\tilde{f}_j}(\gamma|_{[t_0,t_1]})&\geq L_{\tilde{f}_j}(\tau_j)\ge d(r_0,r_1).\label{eq-Question?!}
    \end{align}
    Summing up the estimation of each segment, we arrive at a lower bound for the entire path. 
    \begin{align}
        L_{\tilde{f}_j}(\gamma)&>L(\ell_{pr_0}) + L(\ell_{r_0r_1})+L(\ell_{r_1q})-B_j-B'_j-C_j
        \\&=L(\ell_{pr_0}) + L(\ell_{r_0r_1})+L(\ell_{r_1q})-T_j
        \\&=d(p,r_0)+d(r_0,r_1)+d(r_1,q)-T_j,
    \end{align}
    where we have defined $T_j=B_j+B'_j+C_j$.
    Comparing this lower bound for paths that intersect $\{\frac{1}{2}\}\times[0,1]$ with $\ell_{pq}$, we notice  
    \begin{align}
    d(p,r_0)+d(r_0,r_1)+d(r_1,q)-T_j \geq d(p,q)-T_j.
    \end{align}
    giving us the lower bound of $d(p,q)-T_j\leq d_{\tilde{f}_j}(p,q)$

    \textbf{Case 2: $p$, $q\in (\frac{1}{2},1]\times[0,1]$} Follows by the same logic as in Case 1.
 
    \textbf{Case 3: $p\in [0,\frac{1}{2}]\times[0,1]$, $q\in [\frac{1}{2},1]\times[0,1]$}

    Consider $\gamma$ to be any path which connects $p$ and $q$. Let $\gamma(t_0)=a_{0,j} = (x_{0,j},y_{0,j})$ be the point in which $\gamma$ intersects $(\{\frac{1}{2}-\delta_j\}\times[0,1])$, and $\gamma(t_1)=a_{1,j}=(x_{1,j},y_{1,j})$ be the point in which $\gamma$ intersects $(\{\frac{1}{2}+\delta_j\}\times[0,1])$. 
    
    Define $r_{0,j} = (\frac{1}{2},y_{0,j})$ and $r_{1,j} = (\frac{1}{2},y_{1,j})$. Now some path $\gamma$ connecting points $p$ and $q$.
    \begin{align}
        L_{\tilde{f}_j}(\gamma)&= L_{\tilde{f}_j}(\gamma|_{[0,t_0]})+L_{\tilde{f}_j}(\gamma|_{[t_0,t_1]})+L_{\tilde{f}_j}(\gamma|_{[t_1,1]})
        \\ &\geq L(\ell_{p,{a_{0,j}}})+L_{\tilde{f}_j}(\gamma|_{[t_0,t_1]})+L(\ell_{a_{1,j}}q) 
        \\& = L(\ell_{p,r_{0,j}})+(L(\ell_{p,{a_{0,j}}})-L(\ell_{p,r_{0,j}}))+L_{\tilde{f}_j}(\gamma|_{[t_0,t_1]})
        \\&\quad +(L(\ell_{a_{1,j},q})-L(\ell_{r_{1,j},q}))+ L(\ell_{r_{1,j},q}) 
    \end{align}
    Using what was observed in case 1. We can approximate the length $L_{f_j}(\gamma|_{[t_0,t_1]})\geq d(r_0,r_1)$.
    \begin{align}
    L_{f_j}(\gamma)&\geq L(\ell_{pr_{0,j}})+d(r_0,r_1)+L(\ell_{r_{1,n}q})
    \\&=d(p,r_{0,j}) + d(r_{0,j},r_{1,j}) +d(r_{1,j},q)
    \\&\geq d(p,q),
    \end{align}
    where we use the triangle inequality in the last line. Since this is true for all possible paths, we get a lower bound for the distance of these paths. 

    Combining the lower and upper bounds, we find that $d(p,q)\leq d_{\tilde{f}_j}(p,q)\leq d(p,q)$ showing $d_{\tilde{f}_j}\rightarrow d$ uniformly.

Now, to show that $d_{\tilde{f}_n}$ does not satisfy a lower Lipshitz bound, consider $p_j=(\frac{1}{2},s_{i,j}-\delta_j)$ and $q_j=(\frac{1}{2},s_{i,j}+\delta_j)$ for any chosen $i=1,...,2^j-1$. Then by calculating the euclidean distance we have 
\begin{align}   
d(p_j,q_j) = |s_{i,j}+\delta_j - (s_{i,j}-\delta_j)| = \left|\frac{i}{2^j} + \frac{1}{2^{2j}}-\left(\frac{i}{2^j} - \frac{1}{2^{2j}}\right)\right| = \frac{2}{2^{2j}}, 
\end{align}
which also implies that $d_{\tilde{f}_n}(p_j,q_j) = \frac{1}{j}\left (\frac{2}{2^{2j}} \right)=\frac{2}{j2^{2j}}$

So if we choose $c_j = \frac{2}{j}$ we can calculate
\begin{align}
  c_jd(p_j,q_j) = \frac{2}{j} \left(\frac{2}{2^{2j}}\right) > \frac{2}{j2^{2j}} = d_{\tilde{f}_n}(p_j,q_j),
\end{align}
and hence we know that $d_{\tilde{f}_n}$ does not satisfy a lower Lipschitz bound. 

\end{proof}

In the next example, we change the rate at which we delete portions of the shortcut to see that we can produce an example which interpolates between Theorem \ref{ex-Shortcut No Uniform No Lipschitz Extreme} and Theorem \ref{ex-Shortcut Dense Uniform no Lipschitz}.

\begin{thm}\label{ex-Shortcut No Uniform No Lipschitz Middle}
For the sequence of functions defined by
 \begin{align}
 S=\left\{s_{i,j}= \tfrac{i}{2^j}\,: \,  i=1,2,... (2^j-1),\, j\in \mathbb{N}\right\}= \left\{ \tfrac{1}{2},\tfrac{1}{4}, \tfrac{2}{4},  \tfrac{3}{4}, 
\tfrac{1}{8},\tfrac{2}{8},\tfrac{3}{8} ,...\right\}
 \end{align}
 which is dense in $[0,1]$
 and
 \begin{align}
 \delta_j:=\left(\frac{1}{2}\right)^{j+2},\quad j \in \mathbb{N}.
 \end{align}
 Let
  \begin{align}
 \bar{f}_j(x,y)=
 \begin{cases}
  \frac{1}{j} & (x,y)\in [\frac{1}{2}-\delta_j,\frac{1}{2}+\delta_j]\times[s_{i,j}-\delta_j, s_{i,j} +\delta_j] \textrm{ for } i =1...2^j-1
 \\ 1 & \textrm{ elsewhere }
 \end{cases}
\end{align}
 we find that $d_{\bar{f}_n}$ converges uniformly to $d_{\bar{f}_{\infty}}$ but does not satisfy a lower Lipschitz bound.
\end{thm}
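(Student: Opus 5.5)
The plan is to squeeze $d_{\bar{f}_j}$ between $d_{\bar{f}_\infty}-C_j$ and $d_{\bar{f}_\infty}+C_j$ with $C_j\to 0$, and then apply Theorem \ref{thm-Squeeze}. Throughout I use the explicit formula for $d_{\bar{f}_\infty}$ from Example \ref{thm-Explicity Distance Function Half}: it is the infimum of $d(p,q)$ and of $d(p,p')+\frac12 d(p',q')+d(q',q)$ over $p',q'\in M=\{\frac12\}\times[0,1]$.

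The geometric fact driving everything concerns the column of squares. Each square has side $2\delta_j=2^{-(j+1)}$, and the squares are centered at the points $s_{i,j}$ with spacing $2^{-j}$. So along the strip $S_j=[\frac12-\delta_j,\frac12+\delta_j]\times[0,1]$, in every period of length $2^{-j}$ the vertical direction is half covered by squares (factor $\frac1j$) and half by gaps of length $2^{-(j+1)}$ (factor $1$). Moving vertically through the strip therefore costs about $(\frac12+\frac1{2j})$ per unit length, which tends to the $\frac12$ of $\bar{f}_\infty$.

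\textbf{Upper bound.} Since $\bar{f}_j\le 1$, Theorem \ref{thm-Distance Lower Bound Estimate} gives $d_{\bar{f}_j}\le d$. Now fix $p',q'\in M$ and consider the test curve $\ell_{pp'}*\ell_{p'q'}*\ell_{q'q}$. The two outer segments have $\bar{f}_j$-length at most their Euclidean length. The vertical segment $\ell_{p'q'}$ alternates between squares and gaps, so its $\bar{f}_j$-length is at most
\begin{align}
\left(\tfrac12+\tfrac1{2j}\right)d(p',q')+2^{-j}.
\end{align}
Here the $2^{-j}$ accounts for the two partial periods at the ends and for the strip near $y=0,1$, where there are no squares. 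Taking the infimum over $p',q'$ and comparing with $d$ gives $d_{\bar{f}_j}\le d_{\bar{f}_\infty}+\frac1{2j}+2^{-j}$.

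\textbf{Lower bound.} This is the main obstacle, because a curve can enter and leave the strip many times, and per-excursion errors would accumulate. I avoid this with a global projection argument.

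1. If $\gamma$ misses $S_j$, then $\bar{f}_j\equiv 1$ along $\gamma$, so $L_{\bar{f}_j}(\gamma)\ge d(p,q)\ge d_{\bar{f}_\infty}(p,q)$.

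2. Otherwise, let $a$ and $b$ be the first and last points of $\gamma$ in $S_j$, and let $a'$ and $b'$ be their horizontal projections to $M$, so $d(a,a'),d(b,b')\le\delta_j$.
   - The initial and final pieces of $\gamma$ cost at least $d(p,a)$ and $d(b,q)$.
   - For the middle piece, its $y$-coordinate sweeps $[y_a,y_b]$. At every point whose height lies in a gap, $\bar{f}_j=1$, whether the point is inside or outside the strip.
   - So the middle piece costs at least the one-dimensional measure of the gaps in $[y_a,y_b]$, which is at least $\frac12|y_a-y_b|-2^{-j}=\frac12 d(a',b')-2^{-j}$.

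3. Adding these bounds and replacing $a,b$ by $a',b'$ at cost $2\delta_j$ gives
\begin{align}
L_{\bar{f}_j}(\gamma)\ge d(p,a')+\tfrac12 d(a',b')+d(b',q)-2\delta_j-2^{-j}\ge d_{\bar{f}_\infty}(p,q)-2^{1-j},
\end{align}
where the last inequality uses Example \ref{thm-Explicity Distance Function Half}. Taking the infimum over $\gamma$ yields the lower bound, and Theorem \ref{thm-Squeeze} then gives uniform convergence.

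\textbf{No lower Lipschitz bound.} As in Theorem \ref{ex-Shortcut Dense Uniform no Lipschitz}, take $p_j=(\frac12,s_{i,j}-\delta_j)$ and $q_j=(\frac12,s_{i,j}+\delta_j)$. The vertical segment between them lies in a single square, so
\begin{align}
d_{\bar{f}_j}(p_j,q_j)\le\tfrac1j d(p_j,q_j)\le\tfrac1j\cdot 2\, d_{\bar{f}_\infty}(p_j,q_j),
\end{align}
using $d_{\bar{f}_\infty}(p_j,q_j)=\frac12 d(p_j,q_j)$ for two points of $M$. Since $\frac2j\to 0$, no constant $c>0$ can satisfy $d_{\bar{f}_j}\ge c\,d_{\bar{f}_\infty}$ for all $j$, and the same pair also rules out a lower Lipschitz bound with respect to $d$.
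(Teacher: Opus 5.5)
Your argument is correct. It matches the paper's route for the upper bound and the non-Lipschitz pair, but it takes a genuinely different route for the lower bound.

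\textbf{The paper's lower bound.} The paper splits into cases according to where $p$ and $q$ lie, takes the first and last crossings $r_0,r_1$ of the midline, and then reduces the middle portion of $\gamma$ by a chain of replacements:
\begin{itemize}
\item excursions outside $[\frac12-\delta_j,\frac12+\delta_j]\times[0,1]$ are replaced by segments on its boundary;
\item the curve is assumed monotone in $y$;
\item Lemma \ref{lem-MinimizingCurvesArePiecewiseLinear} is invoked to pass to a piecewise linear $\tau_j$.
\end{itemize}
The length of $\tau_j$ outside the squares is then estimated by counting roughly $N_j=2^jd(r_0,r_1)$ gaps of height $2^{-(j+1)}$. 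The same-side case is handled by ``repeating almost the same argument.''

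\textbf{Your lower bound.} You take the first and last points of $\gamma$ in the whole strip and project them horizontally to $M$ at cost $\delta_j$ each. You then use the fact that $\bar f_j=1$ on every full horizontal band at a gap height, whether inside or outside the strip. So the middle portion costs at least the measure of the gap set in the $y$-range it sweeps, via $|\gamma'|\ge|y'|$ and the one-dimensional area formula.

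\textbf{What each route buys.} Your route removes the confinement, monotonicity and piecewise-linear reductions. It also treats all positions of $p,q$ at once, including same-side curves that enter the strip without touching the midline. Its error terms are explicit and account for partial periods and the missing squares near $y=0,1$, which the paper's count $N_j$ glosses over. The price is that it exploits the specific geometry here: the complement of the squares inside the strip consists of full horizontal bands. The paper's reduction scheme stays closer to the general piecewise-constant framework of Lemma \ref{lem-MinimizingCurvesArePiecewiseLinear}.

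Your upper bound uses the same test curves as the paper, with a more careful accounting of the boundary terms. Your non-Lipschitz pair $p_j,q_j$ is the paper's, and you additionally record the failure relative to $d_{\bar f_\infty}$.
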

\begin{proof}
We begin by establishing a useful claim. Let $p=(\frac{1}{2},y_0)$ and $q=(\frac{1}{2},y_1)$ for $y_0,y_1 \in [0,1]$. Let $A_{i,j}=[\frac{1}{2}-\delta_j,\frac{1}{2}+\delta_j]\times[s_{i,j}-\delta_j, s_{i,j} +\delta_j]$ and $A_j = \bigcup^{2^j-1}_{i=1}A_{i,j}$.

Observe that,
\begin{align}
    d_{\bar{f}_j}(p,q) = L(\ell_{pq} \cap A_j^c)+ \frac{1}{j} L(\ell_{pq} \cap A_j)
\end{align}
and let the gap between any two consecutive $A_{i,j}$ be represented by $\eta_j$. The Euclidean distance between the center points of two consecutive $A_{i,j}$ is $|s_{i+1,j}-s_{i,j}|=|\frac{i+1}{2^j} - \frac{i}{2^j}|=|\frac{1}{2^j}|$ for $i =1,2,...,2^j-2$. Since we know the distance between the center point of any $A_{i,j}$ and $s_{i,j}-\delta_j$ is $|(s_{i,j}-\delta_j)-s_{i,j}|=\delta_j$. Hence, the vertical Euclidean distance between any two consecutive $A_{i,j}$ is 
\begin{align}
    \eta_j &= \frac{1}{2^j}-2\delta_j
    =\frac{1}{2^j}-2\left[\left(\frac{1}{2}\right)^{j+2}\right]
= \frac{1}{2^{j+1}}.
\end{align}
We let $N_j = \frac{d(p,q)}{\frac{1}{2^j}}=2^jd(p,q)$ represent an estimate of the number of $A_{i,j}$ between $p$ and $q$. We may observe that $L(\ell_{pq} \cap A^c_j) \le \eta_jN_j$ since we know $\eta_j$ is the maximum distance between two consecutive $A_{i,j}$. Likewise, $L(\ell_{pq}\cap A_j) \le 2\delta_jN_j$ where $2\delta_j$ is the length of each $A_{i,j}$ for any $j \in \N$. Hence, we may see,
\begin{align}
    d_{\bar{f}_j}(p,q) &\le N_j\eta_j+\frac{1}{j}N_j2\delta_j
    \\&=N_j\eta_j + \frac{2\delta_jN_j}{j}
    \\&= \frac{1}{2}d(p,q)+\frac{2[(\frac{1}{2^{j+2}})(2^jd(p,q))]}{j}
    \\&=\frac{1}{2}d(p,q)+\frac{\frac{1}{2}d(p,q)}{j}
    \\&\le\frac{1}{2}d(p,q)+\frac{\sqrt{2}}{2j}
    \\& = d_{\bar{f}_{\infty}}(p,q) + C_j,
\end{align}
where $C_j = \frac{\sqrt{2}}{2j}$. 

Next to show that $d_{\bar{f}_n}$ converges uniformly to $d_{\bar{f}_\infty}$ we will break into cases and the previous claim will be used in the next case.

\textbf{Case 1: }Consider for some $p=(x_0,y_0),q=(x_1,y_1)$, where $p \in [0, \frac{1}{2}) \times [0,1],q \in (\frac{1}{2}, 1] \times [0,1]$. We know that there exists a $J \in \N$ such that for all $j \ge J$ we have $p \notin [\delta_j - \frac{1}{2}, \frac{1}{2}) \times [0,1]$ and $q \notin (\frac{1}{2},\frac{1}{2} + \delta_j] \times[0, 1]$.  Let $p',q' \in M= \{(\frac{1}{2},y) | \ y\in [0,1]\}$. Let $p''\in I_j$ where $I_j = \{\ell_{pp'} \cap \{\frac{1}{2}-\delta_j\} \times [0,1]\}$ and $q'' \in I_j'$ where $I_j'=\{\ell_{q'q} \cap \{\delta_j + \frac{1}{2}\} \times [0,1]\}$. We may observe by Thm. \ref{thm-Distance Lower Bound Estimate},
\begin{align}
    L_{\bar{f}_j}(\ell_{pp'}) \le L(\ell_{pp'}).
\end{align}
Now for $\phi_{pq} = \ell_{pp'} *\ell_{p'q'} * \ell_{q'q}$ we can calculate
\begin{align}
    L_{\bar{f}_j}(\phi_{pq}) &= L_{\bar{f}_j}(\ell_{pp'})+L_{\bar{f}_j}(\ell_{p''p'})+L_{\bar{f}_j}(\ell_{p'q'})+L_{\bar{f}_j}(\ell_{q'q''})+L(\ell_{q''q})
    \\&\le L(\ell_{pp''})+L(\ell_{p''p'})+L_{\bar{f}_j}(\ell_{p'q'})+L(\ell_{q'q''})+L(\ell_{q''q})
    \\&= L(\ell_{pp'})+ L_{\bar{f}_j}(\ell_{p'q'})+L(\ell_{q'q})
    \\&\le d(p,p') + \frac{1}{2}d(p',q') + d(q',q) + C_j,
    \end{align}
    where the last line follows from the first claim.
    Now by taking the infimum and applying Theorem \ref{thm-Explicity Distance Function Half}
    \begin{align}\label{eq-UpperBoundBar}
    d_{\bar{f}_j}(p,q) \le d_{\bar{f}_\infty}(p,q) + C_j.
\end{align}

Let $\gamma$ be a path such that it connects points $p$ and $q$. Denote $r_0=\gamma(t_0)=(x_{r_0},y_{r_0})$ to be the first time it intersects with $(\{\frac{1}{2}\}\times[0,1])$, and $r_1=\gamma(t_1) = (x_{r_1},y_{r_1})$ be the last time it intersects this set. Also let $a_{0,j}=\gamma(s_{0,j})$ be the first time gamma intersects $\{\frac{1}{2}-\delta_j\}\times[0,1]$ for $t\in[0,t_0]$ and $a_{1,j}=\gamma(s_{1,j})$ be the last time $\gamma$ intersects $\{\frac{1}{2}+\delta_j\}\times[0,1]$ for $t\in [t_1,1]$. Calculating the length of $\gamma$ we find 
    \begin{align}
        L_{\bar{f}_j}(\gamma)&= L(\gamma|_{[0,s_{0,j}]})+ L_{\bar{f}_j}(\gamma|_{[s_{0,j},t_0]})+L_{\bar{f}_j}(\gamma|_{[t_0,t_1]})+L_{\bar{f}_j}(\gamma|_{[t_1,s_{1,j}]}) + L(\gamma|_{[s_{1,j},1]})
        \\&\geq L(\ell_{pa_{0,j}})+L_{\bar{f}_j}(\gamma|_{[t_0,t_1]})+L(\ell_{a_{1,j}q})
        \\&= L(\ell_{pr_0}))+(L(\ell_{pa_{0,j}})- L(\ell_{pr_0}))+L_{\bar{f}_j}(\gamma|_{[t_0,t_1]})
        \\&\quad+(L(\ell_{a_{1,j}q})-L(\ell_{r_1q}))+L(\ell_{r_1q})
        \\&= L(\ell_{pr_0})-B_j+L_{\bar{f}_j}(\gamma|_{[t_0,t_1]})-B_j'+L(\ell_{r_1q})
        \\& \geq L(\ell_{pr_0})-B_j+L_{\bar{f}_j}(\gamma|_{[t_0,t_1]})-B_j'+L(\ell_{r_1q}).
    \end{align}
With $L(\ell_{pa_{0,j}})-L(\ell_{pr_0}) = B_j\rightarrow 0$, and $ L(\ell_{a_{1,j}1})-L(\ell_{r_1}q)=B_j'\rightarrow 0$.
 
    Now we want to get a better estimate for $L_{\bar{f}_j}(\gamma|_{[t_0,t_1]})$ . To this end, we claim that we only have to consider paths $\psi_j$ from $r_{0}$ to $r_{1}$ such that $\psi_j\subset [\frac{1}{2}-\delta_j,\frac{1}{2}+\delta_j]\times[0,1]$ $\forall j \in \N$.

To prove this claim, let $\psi_j$ be a path connecting $r_{0}$ and $r_{1}$. Suppose $\psi_j([0,1]) \cap ([\frac{1}{2}-\delta_j,\frac{1}{2}+\delta_j]\times[0,1])^c\not= \emptyset$. Define $E_j=\{e_{0,j},...,e_{m,j}\}$ to be the set of points in which $\psi_j$ exits and enters, consecutively, the set $[\frac{1}{2}-\delta_j,\frac{1}{2}+\delta_j]\times[0,1]$ that lie on its boundary. If we define $c_{i,j}$ to be $\psi (c_{i,j}) = e_{i,j}$ then we can define a path 
\begin{align}
\phi_j(t)=\
    \begin{cases}
        \psi_j(t),& \text{if } t\in [0, c_{i,j}) \cup (c_{i+1,j}, c_{i+2,j}), \text{for } i\in \{0,1,...,m-2\}
        \\\ell_{e_{i,j}e_{i+1,j}},& \text{if } t\in [c_{i,j},c_{i+1,j}], \text{ for } i \in \{0,1,...,m-1\}
    \end{cases}
\end{align}

which is contained in the $[\frac{1}{2}-\delta_j,\frac{1}{2}+\delta_j]\times[0,1]$. By repeated use of the triangle inequality, we then adjust every possible path $\gamma$ of this form and show there exists one with less length.
\begin{align}
    L_{\bar{f}_j}(\psi_j)&=L_{\bar{f}_j}(\psi_j\cap([\frac{1}{2}-\delta_j,\frac{1}{2}+\delta_j]\times[0,1]))+L(\psi_j\cap([\frac{1}{2}-\delta_j,\frac{1}{2}+\delta_j]\times[0,1])^c)
    \\&\geq L_{\bar{f}_j}(\psi_j\cap([\frac{1}{2}-\delta_j,\frac{1}{2}+\delta_j]\times [0,1]) +\sum_{k=1}^m d(e_{k-1,j},e_{k,j}) 
    \\&\geq L_{\bar{f}_j}(\psi_j\cap([\frac{1}{2}-\delta_j,\frac{1}{2}+\delta_j]\times [0,1]) +\sum_{k=1}^mL_{\tilde{f}_j}(\ell_{e_{k-1,j}e_{k,j}})
    \\&=L_{\bar{f}_j}(\phi_j).
\end{align}

Hence, we have established the claim and we only need to consider curves from $r_{0}$ to $r_{1}$ such that $\psi_j\subset [\frac{1}{2}-\delta_j,\frac{1}{2}+\delta_j]\times[0,1]$ $\forall j \in \N$. Furthermore, we may assume that $\psi_j$ is monotone non-decreasing in $y$ since we could always find a monotone curve  which is shorter in length than a curve which is non-monotone in $y$. 

 With this in mind, define another set $U_j=\{u_{k_1,j},u_{k_2,j},...,u_{k_b,j}\}$ to be the consecutive intersections of $\psi_j$ with some $([\frac{1}{2}-\delta_j,\frac{1}{2}+\delta_j]\times \{s_{k,j}-\delta_j,s_{k,j}+\delta_j\}) $. Also, define $v_{k_i,j}$ so that $\gamma(v_{k_i,j})=u_{k_i,j}$ for $i \in \{1,...,b\}$ and $\gamma(t_0)=u_{k_0,j}=\gamma(v_{k_{0,j}})$, $\gamma(t_1)=u_{k_{b+1},j}=\gamma(v_{k_{b+1,j}})$. We then define a path 
\begin{align}
    \tau_j=\ell_{r_0u_{k_1,j}}*...*\ell_{u_{k_b,j}r_1}
\end{align}
and by applying Lemma $\ref{lem-MinimizingCurvesArePiecewiseLinear}$ we know that $\tau_j$ is exactly the type of curve which needs to be considered when estimating the distance. So we can make the observation that $L_{\bar{f}_j}(\psi_j)\geq L_{\bar{f}_j}(\tau_j)$. Let $N_j=\frac{d(r_{0},r_{1})}{\frac{1}{2^j}}=2^jd(r_{0},r_{1})$ be the estimated amount of boxes in between $r_{0}$ and $r_{1}$. Define $A_{i,j} = [\frac{1}{2}-\delta_j, \frac{1}{2} +\delta_j]\times[s_{i,j} -\delta_j,s_{i,j}+\delta_j]$ with $\displaystyle A_j = \bigcup_{0\leq i\leq 2^j-1} A_{i,j}$. Then we can use this to estimate the length of $\tau_j$. 

\begin{align}
    L_{\bar{f}_j}(\tau_j) &= \label{eq-First}L_{\bar{f}_j}(\tau_j \cap A_j) +  L_{\bar{f}_j}(\tau_j\cap (A_j)^c)
    \\ &\geq L(\tau_j\cap (A_j)^c)
    \\&= \frac{1}{2^{j+1}}N_j
    \\&=\frac{1}{2}d(r_{0},r_{1})\label{eq-Last}
\end{align}

We can then use this to bound $L_{\bar{f}_j}(\gamma|_{[t_0,t_1]})$ from below.
    \begin{align}
        L_{\bar{f}_j}(\gamma|_{[t_0,t_1]})&\geq L_{\bar{f}_j}(\tau_j)\label{eq-Question?!}
        \ge \frac{1}{2}d(r_{0},r_{1})
    \end{align}
   Putting together the observations of this case we arrive at a lower bound for the entire path. 
    \begin{align}
        L_{\bar{f}_j}(\gamma_j)&\ge d(p,r_{0})+\frac{1}{2}d(r_{0},r_{1})+d(r_{1},q)-B_j-B_j',
    \end{align}
and by taking the inf over all paths connecting $p$ to $q$ we find
    \begin{align}
       d_{\bar{f}_j}(p,q) &\ge d(p,r_{0})+\frac{1}{2}d(r_{0},r_{1})+d(r_{1},q)  -B_j-B_j'
        \\&\ge d_{\bar{f}_{\infty}}(p,q)  -B_j-B_j'\label{eq-LowerBoundLongArgument2}
    \end{align}
    Finally, by combining with \eqref{eq-UpperBoundBar}, we find
    \begin{align}
        d_{\bar{f}_{\infty}}(p,q)  -B_j-B_j'\le d_{\bar{f}_j}(p,q)  
        \le d_{\bar{f}_{\infty}}(p,q) 
    \end{align}

    \textbf{Case 2:} $p=(\frac{1}{2},y_0)$ and $q=(\frac{1}{2},y_1)$ for $y_0,y_1 \in [0,1]$

    By combining the first claim with \eqref{eq-First}-\eqref{eq-Last} we see that
    \begin{align}
   d_{\bar{f}_\infty}(p,q) - C_j\le  d_{\bar{f}_j}(p,q) \le d_{\bar{f}_\infty}(p,q) + C_j.
\end{align}

    \textbf{Case 3:} Consider for some $p=(x_0,y_0),q=(x_1,y_1)$, where $p,q \in [0, \frac{1}{2}) \times [0,1]$ or $p,q \in (\frac{1}{2}, 1] \times [0,1]$.

    Then we can start by estimating in terms of the straight line connecting $p$ to $q$ and find
    \begin{align}\label{eq-Straight Line Considerations}
        d_{\bar{f}_j}(p,q) & \le L_{\bar{f}_j}(\ell_{pq})\le L(\ell_{pq})=d(p,q).
    \end{align}
    To finish the argument for the upper bound, one can repeat the argument that leads to \eqref{eq-UpperBoundBar}, while including \eqref{eq-Straight Line Considerations} this time, to conclude
    \begin{align}
        d_{\bar{f}_j}(p,q) \le d_{\bar{f}_{\infty}}(p,q) + C_j,
    \end{align}
    where $C_j \searrow 0$ as $j \rightarrow \infty$.
    Similarly, for $j$ chosen large enough we know that $\ell_{pq}$ is not contained in the region $[\frac{1}{2}-\delta_j,\frac{1}{2}+\delta_j]\times [0,1]$, so we can observe that
    \begin{align}\label{eq-Straight Line Lower Bound}
        L_{\bar{f}_j}(\ell_{pq})&= L(\ell_{pq}).
    \end{align}
 Now by repeating almost the same argument that lead to \eqref{eq-LowerBoundLongArgument2}, with curves which attempt to take advantage of the shortcut in the conformal factor around the midline of the square, and combining with \eqref{eq-Straight Line Lower Bound} we find
\begin{align}
        d_{\bar{f}_j}(p,q) \ge d_{\bar{f}_{\infty}}(p,q) - C_j,
    \end{align}
    where $C_j \searrow 0$ as $j \rightarrow \infty$.

    By combining all three cases with Thm \ref{thm-Squeeze} we have $d_{\bar{f}_j} \rightarrow d_{\bar{f}_\infty}$.

    To show that $d_{\bar{f}_\infty}$ does not satisfy a lower Lipshitz bound, consider $p_j=(\frac{1}{2},s_{i,j}-\delta_j)$ and $q_j=(\frac{1}{2},s_{i,j}+\delta_j)$ for any chosen $i=1,...,2^j-1$. Then by calculating distance we have 
\begin{align}   
d(p_j,q_j) = |s_{i,j}+\delta_j - (s_{i,j}-\delta_j)| = \left | 2\delta_j \right| = \frac{2}{2^{j+2}} = \frac{1}{2^{j+1}}
\end{align}
which also implies that $d_{\bar{f}_\infty}(p_j,q_j) =\frac{1}{j} d(p_j, q_j) = \frac{1}{j}\left (\frac{1}{2^{j+1}} \right)=\frac{1}{j2^{j+1}}$

Choose $c_j=\frac{2}{j}$, and by combining previous results we have the following

\begin{align}
    c_j d(p_j,q_j) = \frac{2}{j}\left(\frac{1}{2^{j+1}}\right) = \frac{1}{j2^j} > \frac{1}{j2^{j+1}} =d_{\bar{f}_\infty}(p_j,q_j).
\end{align}
Hence $d_{\bar{f}_\infty}$ does not satisfy a Lipschitz lower bound as desired.
 
\end{proof}

In the last example, we change the rate at which we delete portions of the shortcut again to see that we can produce an example where we pull the shortcut to zero in the limit.

\begin{thm}\label{ex-Shortcut No Uniform No Lipschitz Extreme}
For the sequence of functions defined by
 \begin{align}
 S=\left\{s_{i,j}= \tfrac{i}{2^j}\,: \,  i=1,2,... (2^j-1),\, j\in \mathbb{N}\right\}= \left\{ \tfrac{1}{2},\tfrac{1}{4}, \tfrac{2}{4},  \tfrac{3}{4}, 
\tfrac{1}{8},\tfrac{2}{8},\tfrac{3}{8} ,...\right\}
 \end{align}
 which is dense in $[0,1]$
 and
 \begin{align}
 \delta_j:=\left (\frac{1}{2} \right)^{j+1},\quad j \in \mathbb{N}.
 \end{align}
 Let
  \begin{align}
 \tilde{f}_j(x,y)=
 \begin{cases}
  \frac{1}{j} & (x,y)\in [\frac{1}{2}-\delta_j,\frac{1}{2}+\delta_j]\times[s_{i,j}-\delta_j, s_{i,j} +\delta_j] \textrm{ for } i =1,2, ...,(2^j-1)
 \\ 1 & \textrm{ elsewhere }
 \end{cases}
\end{align}
    we find that $d_{\bar{f}_n}$ converges uniformly to $d_{\hat{f}_\infty}$ but does not satisfy a lower Lipschitz bound.
  
\end{thm}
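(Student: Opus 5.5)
The plan is to follow the proof of Theorem \ref{ex-Shortcut on a Rectangle}, using first a geometric observation about this choice of $\delta_j$. Since $\delta_j = 2^{-(j+1)}$, each box $A_{i,j}=[\frac{1}{2}-\delta_j,\frac{1}{2}+\delta_j]\times[s_{i,j}-\delta_j,s_{i,j}+\delta_j]$ has height $2\delta_j = 2^{-j}$. This equals the spacing $s_{i+1,j}-s_{i,j}=2^{-j}$, so consecutive boxes share an edge and the gap $\eta_j$ from Theorem \ref{ex-Shortcut No Uniform No Lipschitz Middle} is now zero. Hence $A_j=\bigcup_i A_{i,j}$ is the full strip $R_j=[\frac{1}{2}-\delta_j,\frac{1}{2}+\delta_j]\times[2^{-(j+1)},1-2^{-(j+1)}]$, on which $\tilde f_j=\frac{1}{j}$. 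Only two short pieces of the midline, each of length $2^{-(j+1)}$, lie outside $R_j$. So this example is essentially Theorem \ref{ex-Shortcut on a Rectangle} with width $2\delta_j$ instead of $\frac{1}{n}$ and factor $\frac{1}{j}$. I will squeeze $d_{\tilde f_j}$ against $d_{\hat f_\infty}$ and conclude with Theorem \ref{thm-Squeeze}.

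\textbf{Upper bound.} Let $p=(x_0,y_0)$ and $q=(x_1,y_1)$. First, $d_{\tilde f_j}(p,q)\le L_{\tilde f_j}(\ell_{pq})\le d(p,q)$ by Theorem \ref{thm-Distance Lower Bound Estimate}, since $\tilde f_j\le 1$. Second, let $\bar y_k$ be $y_k$ clamped to $[2^{-(j+1)},1-2^{-(j+1)}]$, and consider the curve
\[
\ell_{p,(\frac12,y_0)}*\ell_{(\frac12,y_0),(\frac12,\bar y_0)}*\ell_{(\frac12,\bar y_0),(\frac12,\bar y_1)}*\ell_{(\frac12,\bar y_1),(\frac12,y_1)}*\ell_{(\frac12,y_1),q}.
\]
Its horizontal pieces cost at most $|x_0-\frac12|+|x_1-\frac12|$. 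The two clamping pieces cost at most $2^{-(j+1)}$ each. The vertical piece lies in $R_j$ and costs at most $\frac1j$. Taking the minimum of the two bounds and using \eqref{eq-Quotient Metric Space} gives $d_{\tilde f_j}\le d_{\hat f_\infty}+\frac1j+2^{-j}$.

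\textbf{Lower bound.} Take any piecewise smooth $\gamma$ from $p$ to $q$. If $\gamma$ misses the interior of $R_j$, then $\tilde f_j=1$ along it, except possibly on a set of parameter values of measure zero lying on $\partial R_j$. In that case $L_{\tilde f_j}(\gamma)\ge d(p,q)$. I will handle the boundary convention as in Lemma \ref{lem-MinimizingCurvesArePiecewiseLinear}; in any case a curve running along $\partial R_j$ is treated by the next case. If $\gamma$ meets $R_j$, let $t_0$ be the first time and $t_1$ the last time it does so. Then $L_{\tilde f_j}(\gamma)\ge L(\gamma|_{[0,t_0]})+L(\gamma|_{[t_1,1]})$. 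This is at least the horizontal distance from $p$ to $R_j$ plus the horizontal distance from $q$ to $R_j$, which is $\ge |x_0-\frac12|+|x_1-\frac12|-2\delta_j$. Note that $p$ and $q$ need not be in the same half here, and Case 3 of the quotient example is already absorbed. Therefore $d_{\tilde f_j}\ge \min\{d(p,q),|x_0-\frac12|+|x_1-\frac12|\}-2\delta_j = d_{\hat f_\infty}-2^{-j}$. Combining with the upper bound, Theorem \ref{thm-Squeeze} with $C_j=\frac1j+2^{-j}$ gives uniform convergence.

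\textbf{Failure of a lower Lipschitz bound.} As in the previous two theorems, take $p_j=(\frac12,s_{i,j}-\delta_j)$ and $q_j=(\frac12,s_{i,j}+\delta_j)$. Then $d(p_j,q_j)=2^{-j}$, while $d_{\tilde f_j}(p_j,q_j)\le \frac1j 2^{-j}$. So no constant $c>0$ satisfies $c\,d\le d_{\tilde f_j}$ for all $j$. The same points, having $d_{\hat f_\infty}(p_j,q_j)=0<d(p_j,q_j)$, also show the limit is not bi-Lipschitz to $d$.

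\textbf{Main obstacle.} I expect the main difficulty to be bookkeeping rather than ideas. The uncovered end segments near $y=0$ and $y=1$ have to be tracked; I handle them by clamping and pay at most $2^{-j}$. The other point is the lower bound for curves that only graze $\partial R_j$. I will treat these by noting that the first-entry/last-exit estimate applies to any curve touching the closed strip, so the argument needs no monotonicity reduction of the kind used in Theorem \ref{ex-Shortcut Dense Uniform no Lipschitz}.
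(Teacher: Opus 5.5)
Your proposal is correct. The upper bound and the failure of a uniform lower Lipschitz bound are essentially the paper's arguments. For the upper bound you travel horizontally to the midline, pay at most $2^{-j}$ on the two uncovered end pieces, and pay at most $\frac1j$ inside the strip. For the Lipschitz failure you use a segment of length $2^{-j}$ inside a single box, so $d_{\tilde f_j}\le \frac1j\, d$ there. The paper uses the horizontal crossing from $(\frac12-\delta_j,s_{i,j})$ to $(\frac12+\delta_j,s_{i,j})$ rather than your vertical segment, which makes no difference.

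The genuine difference is in the lower bound, which the paper does not prove directly. It observes $\hat f_j\le\tilde f_j$. This holds because $\delta_j=2^{-(j+1)}\le\frac{1}{2j}$, so the strip where $\tilde f_j=\frac1j$ lies inside the strip where $\hat f_j=\frac1j$. It then applies Theorem \ref{thm-Distance Lower Bound Estimate} to get $d_{\hat f_j}\le d_{\tilde f_j}$, and imports $d_{\hat f_j}\ge d_{\hat f_\infty}-C_j$ from Theorem \ref{ex-Shortcut on a Rectangle}. Your first-entry/last-exit estimate on the closed strip is self-contained and gives the explicit error $2\delta_j=2^{-j}$. The paper's route is shorter by reuse, but it depends on that earlier theorem and on the unstated inequality $\delta_j\le\frac{1}{2j}$. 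Your lower bound never actually uses the tiling observation, only that every box lies in $\{|x-\frac12|\le\delta_j\}$; the tiling is what powers the upper bound.

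One small correction: the boxes are closed, so $\tilde f_j=\frac1j$ on $\partial R_j$. A curve can therefore run along the vertical edges of $R_j$ for positive length. Your claim that a curve missing the interior of $R_j$ sees $\tilde f_j=1$ off a null set is false. Instead, split on whether $\gamma$ meets the closed strip $R_j$, as your second case already does, and everything goes through unchanged.
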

\begin{proof}
    For the lower bound, notice that $\hat{f}_n \le \tilde{f}_n$ which implies by Theorem \ref{thm-Distance Lower Bound Estimate} that $d_{\hat{f}_n} \le d_{\tilde{f}_n}$  and by Theorem \ref{ex-Shortcut on a Rectangle} we know that $d_{\hat{f}_n} \ge d_{\hat{f}_{\infty}}-C_n$ and hence we see that $d_{\hat{f}_{\infty}}-C_n \le d_{\tilde{f}_n}$.

Now we will establish the upper bound by considering three cases.
    
    \textbf{Case 1: $p \in [0,\frac{1}{2}]\times[0,1]$, $q\in [\frac{1}{2},1]\times[0,1]$}
    
    We know $\min\{|x_0-\frac{1}{2}|+|x_1-\frac{1}{2}|,d(p,q)\} = |x_0-\frac{1}{2}|+|x_1-\frac{1}{2}|$. Let $a_0 = (\frac{1}{2},y_0)$, $a_1 = (\frac{1}{2},y_1)$ and consider the path
   
    \begin{align}
        \alpha = \ell_{p,a_0}*\ell_{a_0,a_1}*\ell_{a_1,q}
    \end{align}
    Let $b_0=(\frac{1}{2},0)$, $b_1=(\frac{1}{2},1)$, $A_{i,j}=(\{\frac{1}{2}\}\times [s_{i,j}-\delta_j])$, and notice that $A_{k,j}\cap A_{l,j}=\emptyset$ for each $k,l\in \{1,2,...,2^j-1\}$, $k \not = l$. To calculate the length under the warping function, we must note that 
    \begin{align}
        L_{\tilde{f}_j}(\ell_{b_0b_1}) &=  L_{\tilde{f}_j}(\ell_{b_0b_1}\cap(\bigcup_{i=1}^{2^j-1} A_{i,j}))+L_{\tilde{f}_j}(\ell_{b_0b_1}\cap(\bigcup_{i=1}^{2^j-1} A_{i,j})^C)
        \\&=   \left(\sum_{i=1}^{2^j-1}  L_{\tilde{f}_j}(\ell_{b_0b_1}\cap A_{i,j})\right)+L_{\tilde{f}_j}(\ell_{b_0b_1}\cap(\bigcup_{i=1}^{2^j-1} A_{i,j})^C)
        \\&= \left(\sum_{i=1}^{2^j-1} \frac{2\delta_j}{j}\right)+\left(1-\left(\sum_{i=1}^{2^j-1} 2\delta_j\right)\right)
        \\& = (2^j-1)\frac{2\delta_j}{j}+(1- (2^j-1)2\delta_j)
    \end{align}
    We can estimate the length of $\alpha$ to be length to be
    \begin{align}
        d_{\tilde{f}_j}(p,q)&\leq L_{\tilde{f}_j}(\alpha)
        \\& =L_{\tilde{f}_j}(\ell_{p,a_0})+L_{\tilde{f}_j}(\ell_{a_0,a_1})+L_{\tilde{f}_j}(\ell_{a_1,q})
        \\&\leq L_{\tilde{f}_j}(\ell_{p,a_0})+L_{\tilde{f}_j}(\ell_{b_0b_1})+L_{\tilde{f}_j}(\ell_{a_1,q})
        \\&\leq|x_0-\frac{1}{2}|+(2^j-1)\frac{2\delta_j}{j}+(1- (2^j-1)2\delta_j)+|\frac{1}{2}-x_1|
        \\&=|x_0-\frac{1}{2}|+(1-\frac{1}{2^{j+1}})\frac{1}{j}+\frac{1}{2^{j+1}}+|\frac{1}{2}-x_1|
        \\&=d_{\hat{f}_\infty}(p,q)+C_n
    \end{align}
    
    \textbf{Case 2: $p,q \in [0,\frac{1}{2}]\times[0,1]$}
    Consider the path $\gamma$ such that it doesn't intersect $\{\frac{1}{2}\}\times[0,1]$, then there exists some $N$ such that $\tilde{f}_j=1$ for all $n\geq N$. In this region, we know the shortest length of any curve connecting $p$ to $q$ will be $d(p,q)$. If $\gamma \cap(\{\frac{1}{2}\}\times[0,1])\not = \emptyset$ then consider the path 
    \begin{align}
        \phi = \ell_{p,a_0}*\ell_{a_0,a_1}*\ell_{a_1,q}
    \end{align}
    We can estimate its length to be
    \begin{align}
        L_{\tilde{f}_j}(\phi)&=L_{\tilde{f}_j}(\ell_{p,a_0})+L_{\tilde{f}_j}(\ell_{a_0,a_1})+L_{\tilde{f}_j}(\ell_{a_1,q})
        \\&\leq L(\ell_{p,a_0})+L_{\tilde{f}_j}(\ell_{(\frac{1}{2},0),(\frac{1}{2},1)})+L(\ell_{a_1,q})
        \\&=|\frac{1}{2}-x_0|+ (1-\frac{1}{2^{j+1}})\frac{1}{j}+\frac{1}{2^{j+1}}+ |\frac{1}{2}-x_1|
        \\&=|\frac{1}{2}-x_0|+|\frac{1}{2}-x_1|+C_j.
    \end{align}
    Since any curve connecting $p$ to $q$ falls into one of these two cases we can take the minimum of these two estimates to find the upper bound
    \begin{align}
        d_{\tilde{f}_j}(p,q)&\leq \min(d(p,q),|\frac{1}{2}-x_0|+|\frac{1}{2}-x_1|+C_j)
        \\&\le \min(d(p,q)+C_j, |\frac{1}{2}-x_0|+|\frac{1}{2}-x_1|+C_j)
        \\&\leq \min(d(p,q), |\frac{1}{2}-x_0|+|\frac{1}{2}-x_1|)+C_j
        \\&=d_{\hat{f}_\infty}(p,q) + C_j.
    \end{align}
    
        \textbf{Case 3: $p,q\in [\frac{1}{2},1]\times[0,1]$}
    Symmetric to Case 2
    
    Combining all three cases and our earlier lower bound estimate, we've shown $d_{\hat{f}_\infty}(p,q)-C_j\leq d_{\tilde{f}_j}(p,q)\leq d_{\hat{f}_\infty}(p,q)+C_j$ proving $d_{\tilde{f}_j}\rightarrow d_{\hat{f}_\infty}$ uniformly.

To show that $d_{\tilde{f_j}}$ does not satisfy lower Lipschitz bound, we will let $p_j = (\frac{1}{2} - \delta_j, s_{i,j})$ and $q_j = (\frac{1}{2} + \delta_j, s_{i,j})$
and the distance between $p_j$ and $d_j$

\begin{align}
 d(p_j,q_j) = \left| \frac{1}{2} - \delta_j - (\frac{1}{2} + \delta_j)\right|
\\
= |-2\delta _j| =  \left(\frac{1}{2^j}\right),
\end{align}
and so then we can calculate the value for $d_{\tilde{f}_j}(p_j,q_j) = \frac{1}{j} d(p_j,q_j) = \frac{1}{j2^{j}}$.

We will pick $c_j = \frac{2}{j}$ to demonstrate that the Lipchitz lower bounds fail since 
\begin{align}
    c_j d(p_j,q_j) = \frac{2}{j} \left( \frac{1}{2^{j}}\right) > \frac{1}{j2^{j}} = d_{\tilde{f}_n}(p_j,q_j).
\end{align}
Hence $d_{\tilde{f_j}}$ fails the Lipschitz lower bound as desired. 
\end{proof}

\section{Proof of Main Theorems}\label{sec-Main Proofs}

We start with the proof of Theorem \ref{thm-Main Theorem Lipschitz Lower Bound} which follows from a simply observation about lengths of curves.

\begin{proof}[Proof of Theorem \ref{thm-Main Theorem Lipschitz Lower Bound}]
    Let $\gamma$ be any piecewise smooth curve connecting $p,q \in [0,1]^2$ and calculate
    \begin{align}
        L_{g_1}(\gamma) &= L_{g_1}(\gamma \cap U)+L_{g_1}(\gamma \cap U^c)
        \\& \ge L_{g_1}(\gamma \cap U)
        \\& \ge c L_{g_0}(\gamma \cap U) \label{eq-UseLowerBoundOnU}
        \\&= c L_{g_0}(\gamma) \label{eq-MeasureZeroCurve}
        \ge c d_{g_0}(p,q),
    \end{align}
    where we used \eqref{eq-LowerBoundOnU} in \eqref{eq-UseLowerBoundOnU} and used that $\mathcal{H}^1_{g_0}(M \setminus U) = 0$ in \eqref{eq-MeasureZeroCurve}. Now by taking the infimum over all curves $\gamma$ we find
\begin{align}
    d_{g_1}(p,q) \ge c d_{g_0}(p,q),
\end{align}
as desired.
\end{proof}

We now establish the proof of Theorem \ref{thm-Main Theorem Uniform Lower Bound} which also follows from a simply observation about lengths of curves.

\begin{proof}[Proof of Theorem \ref{thm-Main Theorem Uniform Lower Bound}]
Let $\gamma$ be any curve piecewise smooth connecting $p,q \in [0,1]^2$ and calculate
    \begin{align}
        L_{g_n}(\gamma) &= L_{g_n}(\gamma \cap U)+L_{g_n}(\gamma \cap U^c)
        \\& \ge L_{g_n}(\gamma \cap U)
        \\& \ge c L_g(\gamma \cap U) \label{eq-UseLowerBoundOnU2}
        \\&\ge c L_g(\gamma) -cC_n \label{eq-DiamToZero}
        \ge c d_g(p,q)-cC_n,
    \end{align}
    where we used \eqref{eq-LowerBoundOnU2} in \eqref{eq-UseLowerBoundOnU2} and used that $\mathcal{H}^1(M \setminus U) \le C_n$ in \eqref{eq-DiamToZero}. Now by taking the infimum over all curves $\gamma$ we find
\begin{align}
    d_{g_n}(p,q) \ge c d_g(p,q)-cC_n,
\end{align}
as desired.
\end{proof}

Moving on to studying upper bounds, we establish Theorem \ref{thm-Main Theorem Lipschitz Upper Bound} which requires foliation of regions by curves. This is a technique which was used extensively in \cite{Allen, Allen-Sormani-2, Allen-Bryden-Second}. The idea is that by foliating a region of full volume by curves connecting points we can relate a volume assumption (or an integral assumption) to a desired conclusion about lengths of curves. Often times this argument involves the coarea formula as we see in the following proof. 

\begin{proof}[Proof of Theorem \ref{thm-Main Theorem Lipschitz Upper Bound}]
 Consider $p,q \in M$ and $\gamma_0$ a distance minimzing smooth geodesic connecting $p$ to $q$ with respect to $g$, i.e. $L_{g}(\gamma_0)= d_g(p,q)$. Now define  $C_{\varepsilon}$ to be a normal neighborhood of $\gamma_0$ of radius $\varepsilon$ which can be foliated by  smooth curves for $\varepsilon>0$ chosen small enough. Then we see that $\vol(U \cap C_{\varepsilon})=\vol(C_{\varepsilon})$ and hence there must be a curve $\gamma$ in the family $C_{\varepsilon}$ so that $L_g(\gamma \cap U) = L_g(\gamma)$. To further illustrate this claim, if every curve $\gamma$ foliating $C_{\varepsilon}$ was such that $L_g(\gamma \cap U) < L_g(\gamma)$ then we would have a contradiction by the coarea formula. Hence, there must be a curve $\gamma$ in the foliation so that $L_g(\gamma \cap U) = L_g(\gamma)$, as desired. Furthermore, this implies that $ L_{g_n}(\gamma) = L_{g_n}(\gamma \cap U)$ since if we parameterize $\gamma$ by $g$ arc length then the $g$ arc length measure, $ds_g$, of $\gamma$ and $\gamma \cap U$ is the same which implies
 \begin{align}
   L_{g_n}(\gamma) &= \int_{\gamma} \sqrt{g_n(\gamma',\gamma')}ds_g  
   = \int_{\gamma\cap U}\sqrt{g_n(\gamma',\gamma')}ds_g =  L_{g_n}(\gamma \cap U). 
 \end{align}
 
Now for any $\delta>0$, by choosing $\varepsilon>0$ small enough, by continuity of the $g$ length of curves foliating $C_{\varepsilon}$ we can ensure that $|L_g(\gamma)-L_g(\gamma_0)|\le \delta$ for all $\gamma$ foliating $C_{\varepsilon}$. This implies that
\begin{align}
    d_{g_n}(p,q) &\le L_{g_n}(\gamma)
    \\& = L_{g_n}(\gamma \cap U)
    \\& \le CL_g(\gamma \cap U)
    \\& = C L_g(\gamma)
    \\& \le CL_g(\gamma_0) + C\delta
    = C d_g(p,q) + C\delta .\label{eq-normal cylinder estimate}
\end{align}
Hence by letting $\delta\rightarrow 0$ we achieve the desired conclusion.
\end{proof}

Lastly, we give the proof of Theorem \ref{thm-Main Theorem Uniform Upper Bound} which follows from using the diameter bound assumption to estimate the lengths of curves from above. 

\begin{proof}[Proof of Theorem \ref{thm-Main Theorem Uniform Upper Bound}]
Consider $p,q \in [0,1]^2$, $\varepsilon>0$, and a piecewise smooth curve so that $L_g(\gamma) \le d(p,q) + \varepsilon$ parameterized with respect to $d$ arclength. Then we can calculate
\begin{align}
    d_{g_n}(p,q) & \le L_{g_n}(\gamma)
    \\&= L_{g_n}\left(\gamma \cap U_n\right)+L_{g_n}\left(\gamma \cap U_n^c\right)
    \\& \le CL_g\left(\gamma \cap U_n\right)+\frac{C_n}{V_n}L_g\left(\gamma \cap U_n^c\right)
    \\& \le CL_g(\gamma )+\frac{C_n}{V_n}\sum _{k=1}^{\infty} \diam(W_k)
    \\& \le C d_g(p,q)+C\varepsilon+C_n.
\end{align}
By taking $\varepsilon \rightarrow 0$ we obtain the desired bound.
\end{proof}

\bibliographystyle{plain}
\bibliography{bibliography}

\end{document}